\theoremstyle{plain}
\newtheorem{theorem}{Theorem}[section]
\newtheorem*{theorem*}{Theorem}
\newtheorem{lemma}[theorem]{Lemma}
\newtheorem{corollary}[theorem]{Corollary}
\theoremstyle{definition}
\newtheorem*{definition*}{Definition}
\newtheorem*{conjecture*}{Conjecture}
\numberwithin{equation}{section}
\newcommand{\C}{\mathbb{C}}
\newcommand{\D}{\mathbb{D}}
\newcommand{\N}{\mathbb{N}}
\newcommand{\R}{\mathbb{R}}
\newcommand{\Z}{\mathbb{Z}}
\newcommand{\U}{\mathcal{U}}
\newcommand{\BB}{\mathscr{B}}
\newcommand{\CC}{\mathscr{C}}
\newcommand{\CK}{\mathscr{C}_K}
\newcommand{\Comp}{\mathscr{C}(\Linf)}
\newcommand{\bigchi}{\mbox{\large$\chi$}}
\newcommand{\Linf}{L^{\hspace{-.2ex}\infty}}
\DeclareMathOperator{\metric}{d}
\newcommand{\x}{x}
\newcommand{\y}{y}
\newcommand{\z}{z}
\newcommand{\Sfin}{S_F}
\newcommand{\Sinf}{S_I}
\newcommand{\TI}{\Bumpeq}
\newcommand{\TII}{\bumpeq}
\newcommand{\TIIIa}{\Doteq}
\newcommand{\TIIIb}{\risingdotseq}
\newtcolorbox{compbox}{sidebyside, sidebyside align=top, colback=white, boxrule=.5pt, fontupper=\footnotesize, fontlower=\footnotesize}
\newlist{alphaList}{enumerate}{1} 
\setlist[alphaList]{label=\normalfont{(\alph*)},ref=(\alph*)}
\newlist{numList}{enumerate}{1}
\setlist[numList]{label=\normalfont{\arabic*.}}
\title[Topological Structure of $\Comp$]{Topological Structure of the Space of Composition Operators on $\Linf$ of an Unbounded, Locally Finite Metric Space}
\author{Robert F. Allen\textsuperscript{1}, Whitney George\textsuperscript{1}, and Matthew A. Pons\textsuperscript{2}}
\address{\textsuperscript{1}Department of Mathematics and Statistics, University of Wisconsin-La Crosse}
\address{\textsuperscript{2}Department of Mathematics and Actuarial Science, North Central College}
\email{rallen@uwlax.edu, wgeorge@uwlax.edu, mapons@noctrl}
\subjclass[2020]{Primary 47B33; Secondary 47B38}
\keywords{composition operator, operator topology, isolated points}
\begin{document}

\begin{abstract}
We study properties of the topological space of composition operators on the Banach algebra of bounded functions on an unbounded, locally finite metric space in the operator norm topology and essential norm topology. Moreover, we characterize the compactness of differences of two such composition operators.
\end{abstract}

\maketitle

\section{Introduction}
Let $\mathcal{X}$ be a Banach space of functions on a domain $\Omega$, and $S(\Omega)$ the set of self-maps of $\Omega$.  For $\varphi$ in $S(\Omega)$, the induced linear operator $C_\varphi:\mathcal{X}\to\mathcal{X}$, defined by \[C_\varphi f = f\circ\varphi;\quad f \in \mathcal{X},\] is called the \textit{composition operator with symbol $\varphi$}. 

This operator was first studied by Nordgren in \cite{Nordgren:1968} where $\Omega$ is taken to be the open unit disk $\D = \{z \in \C : |z| < 1\}$ of $\C$ and $\mathcal{X}$ is the space $H^2(\D)$ defined by 
\[H^2(\D) = \left\{f:\D\to\C \text{ analytic} \;\Bigr\lvert\; \lim_{r \to 1^-} \int_0^{2\pi} \left|f(re^{i\theta})\right|^2\frac{d\theta}{2\pi} < \infty\right\},\] where $d\theta$ is Lebesgue arc-length measure on $\partial\D$.  This space is known as the Hardy-Hilbert space, as it is a Hilbert space within the family of the Hardy spaces $H^p(\D)$, defined for $1 \leq p < \infty$ as
\[H^p(\D) = \left\{f:\D\to\C \text{ analytic} \;\Bigr\lvert\; \lim_{r \to 1^-} \int_0^{2\pi} \left|f(re^{i\theta})\right|^p\frac{d\theta}{2\pi} < \infty\right\}.\]
Typical in the study of composition operators, $H^2(\D)$ and $H^p(\D)$ are the initial spaces of inquiry.  For further information on composition operators acting on $H^2(\D)$ specifically, or more analytic function spaces, the reader is directed to \cite{Shapiro:1993} and \cite{CowenMacCluer:1995}, respectively.

The Banach space of bounded linear operators on $\mathcal{X}$ is denoted by $\BB(\mathcal{X})$.  The operator norm induces a metric space structure on $\BB(\mathcal{X})$ called the operator (or uniform) norm topology.  As a metric space, we can view operators as points in a topological space.  Thus, topological questions can be considered on spaces of operators.  In this paper, we will study the topological structure of the subset of bounded composition operators within $\BB(\mathcal{X})$, denoted by $\CC(\mathcal{X})$.

The question of identifying isolated points in $\CC(\mathcal{X})$ with the operator norm topology was first considered by Berkson and Porta in \cite{BerksonPorta:1980}. It was shown that the identity composition operator acting on $H^p(\D)$ is isolated in $\CC(H^p)$ with the operator norm topology.  In \cite{Berkson:1981}, Berkson generalized this to show if $\varphi$ is a self-map of $\D$ with radial limit function satisfying $|\varphi(\zeta)| = 1$ for all $\zeta$ in a subset $E \subseteq \partial\D$ of positive measure, then $C_\varphi$ is isolated in $\CC(H^p)$.

Afterward, Shapiro and Sundberg in \cite{ShapiroSundberg:1990} investigated whether these self-maps were the only ones to induce isolated composition operators in $\CC(H^2)$ with the operator norm topology.  They showed this was not the case by constructing an analytic self-map $\varphi$ of $\D$ whose radial limit function satisfies $|\varphi(\zeta)| < 1$ almost everywhere on $\partial\D$ that induces an isolated composition operator in $\CC(H^2)$.  They were not able to completely characterize the symbols that induce isolated points in this topology, and presented the following three problems:
\begin{numList}
\item Characterize the components of $\CC(H^2)$.
\item Which composition operators are isolated in $\CC(H^2)$?
\item Which composition differences are compact on $H^2$?
\end{numList}
In addition, Shapiro and Sundberg offered the following conjecture.

\begin{conjecture*}[Shapiro-Sundberg Conjecture] The set of all composition operators that differ from the given one by a compact operator forms a component in $\CC(H^2)$ with the operator norm topology.
\end{conjecture*}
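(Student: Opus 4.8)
The plan is to recast the conjecture as the coincidence of two partitions of $\CC(H^2)$. On the one hand, declare $C_\varphi \sim C_\psi$ exactly when $C_\varphi - C_\psi$ is compact; since the compact operators $\KK(H^2)$ form a closed two-sided ideal of $\BB(H^2)$, the relation $\sim$ is reflexive, symmetric, and transitive, so it partitions $\CC(H^2)$ into \emph{compact-difference classes} $[C_\varphi]$. On the other hand, the connected components in the operator-norm topology also partition $\CC(H^2)$. Writing $\|T\|_e=\inf\{\|T-K\|:K\in\KK(H^2)\}$ for the essential norm --- the norm of $T$ in the Calkin algebra $\BB(H^2)/\KK(H^2)$, which satisfies $\|T\|_e\le\|T\|$ and vanishes precisely when $T$ is compact --- the classes $[C_\varphi]$ are exactly the fibres of the continuous map $C_\varphi\mapsto[C_\varphi]$. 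The conjecture is then the assertion that these two partitions agree, and I would establish it by proving that each class $[C_\varphi]$ is simultaneously (i) closed, (ii) open, and (iii) connected: by (i) and (ii) every component, being connected, is trapped inside one class, while by (iii) every class is trapped inside one component, and the two inclusions force equality.

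Closedness is immediate. If $C_{\varphi_n}\to C_\psi$ in operator norm with each $C_{\varphi_n}\in[C_\varphi]$, then
\[
\|C_\psi-C_\varphi\|_e\le\|C_\psi-C_{\varphi_n}\|_e+\|C_{\varphi_n}-C_\varphi\|_e=\|C_\psi-C_{\varphi_n}\|_e\le\|C_\psi-C_{\varphi_n}\|\to 0,
\]
since $C_{\varphi_n}-C_\varphi$ is compact; hence $C_\psi-C_\varphi$ is compact and $C_\psi\in[C_\varphi]$. For connectedness I would exhibit an operator-norm-continuous path inside the class. Given $C_\varphi\sim C_\psi$, the convex interpolation $\varphi_t=(1-t)\varphi+t\psi$ is a legitimate self-map of $\D$ (by convexity of $\D$), so by Littlewood's subordination principle each $C_{\varphi_t}$ lies in $\CC(H^2)$. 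The content is to verify that $t\mapsto C_{\varphi_t}$ is norm-continuous and that $C_{\varphi_t}-C_\varphi$ stays compact for all $t$; the guiding principle is that compactness of $C_\varphi-C_\psi$ means $\varphi$ and $\psi$ share their boundary-contact set $\{\zeta\in\T:|\varphi(\zeta)|=1\}$ together with matching angular-derivative data there, and this shared data is preserved along the interpolation, which should both kill the essential norm of each pairwise difference and make the path continuous.

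The decisive step is openness (ii), which I would reduce to a \emph{uniform gap} for the essential norm: the existence of $\epsilon_0>0$ such that $\|C_\varphi-C_\psi\|_e>0$ forces $\|C_\varphi-C_\psi\|_e\ge\epsilon_0$. Granting such a gap, any $C_\psi$ with $\|C_\varphi-C_\psi\|<\epsilon_0$ obeys $\|C_\varphi-C_\psi\|_e\le\|C_\varphi-C_\psi\|<\epsilon_0$, whence $\|C_\varphi-C_\psi\|_e=0$ and $C_\psi\in[C_\varphi]$, so the class is open. To produce the gap I would use the Shapiro--MacCluer essential-norm machinery (Nevanlinna counting functions and Carleson-measure estimates) together with a Moorhouse-type characterization of compact differences: when $C_\varphi-C_\psi$ is noncompact there is a boundary point where both symbols touch $\T$ with finite angular derivative but mismatched contact, and one would try to show that any single such mismatch contributes at least a fixed amount to $\|C_\varphi-C_\psi\|_e$.

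This last point is exactly where I expect the proof to break, and it is the heart of the difficulty in the Shapiro--Sundberg problem. There is no evident universal lower bound: by arranging the symbols to meet $\T$ along a sparse sequence of boundary points with angular-derivative contributions decaying to zero, or along tangential approach regions carrying vanishing Carleson mass at each scale, one can make $\|C_\varphi-C_\psi\|_e$ positive yet arbitrarily small. Thus the gap --- and with it the openness of the classes --- cannot hold for arbitrary self-maps, which is precisely why the conjecture as stated for all symbols is so delicate and is not expected to survive in full generality; a component may be strictly larger than a compact-difference class. The realistic theorem I would aim for instead recovers the conjecture under hypotheses that force a gap, such as symbols with finitely many boundary-contact points and nonvanishing angular derivatives, or linear-fractional symbols, where the essential-norm formulas and the compact-difference criterion combine to yield a definite $\epsilon_0$.
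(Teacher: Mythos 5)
You were asked to prove a statement that the paper itself never proves: this is the Shapiro--Sundberg \emph{conjecture}, quoted in the introduction purely as motivation, and the paper explicitly says that the Shapiro--Sundberg problems ``still elude mathematicians'' in the $H^2(\D)$ setting. The only things the paper actually establishes about the conjecture concern its own space $\Comp$: the conjecture's conclusion \emph{fails} for $(\Comp,\metric_u)$, since Theorem \ref{Theorem:CphiDifferenceNorm} makes that topology discrete, and it \emph{holds} for $(\Comp,\metric_e)$, by Theorem \ref{Theorem:EssentialComponentResults}. So there is no proof in the paper to compare yours against, and your proposal --- which candidly ends by arguing that its own decisive step cannot work --- is correctly self-diagnosed rather than a proof. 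Your skepticism in the final paragraph matches the literature: the conjecture is in fact known to be false on $H^2$; Gallardo-Guti\'errez, Gonz\'alez, Nieminen, and Saksman (Adv.\ Math.\ 219 (2008), 986--1001) showed that the connected component containing the compact composition operators is strictly larger than the set of compacts, which is precisely the compact-difference class of any compact $C_\varphi$.

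What is genuinely worth noting is that your clopen-plus-connected skeleton is exactly the mechanism by which the paper proves its essential-norm analogue for $\Comp$: there the essential norm of a difference is a simple function taking only the values $0$, $1$, $2$ (Theorem \ref{Theorem:EssentialNormDifference}), so the uniform gap $\epsilon_0$ you postulated genuinely exists (with $\epsilon_0=1$), the classes $\CK$ and $[\y]_{\TIIIa}$ are open balls (Theorem \ref{Theorem:EssentialBalls}), closed (Lemma \ref{Lemma:EssentiallyClosed}), and connected, and components coincide with compact-difference classes. On $H^2$ that gap fails, as you say, and this is the heart of the matter. Two further cautions about steps you did assert: first, compact differences on $H^2$ are \emph{not} characterized by matching boundary-contact sets and angular-derivative data --- that heuristic comes from Moorhouse-type results in weighted and Bergman-type settings, and no such clean characterization is available for $H^2$; second, norm-continuity of the convex interpolation $t\mapsto C_{(1-t)\varphi+t\psi}$ is far from automatic (it is exactly the sort of statement that fails near isolated operators), and Shapiro and Sundberg themselves could run such a path argument only under stronger hypotheses like Hilbert--Schmidt differences, so your connectedness step (iii) is a second unproved gap, not just openness (ii). Only your closedness argument, via $\|\cdot\|_e\le\|\cdot\|$ and the fact that the compacts form a closed ideal, is complete as written.
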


While the solutions to these problems still elude mathematicians in the $H^2(\D)$ setting, much work has been done to solve the problems for other spaces $\mathcal{X}$ of analytic functions on $\D$.  Manhas provides a survey in \cite{Manhas:2007} of the work on topological structures of composition operators and weighted composition operators on various spaces including the Bergman space, Dirichlet space, Bloch space and weighted Banach space of analytic functions.  The topological structure of (weighted) composition operators between different spaces has also been studied (see \cite{Goebeler:2001,IzuchiOhno:2014,KhoiThomTien:2021}).  This work has migrated to Banach spaces of holomorphic functions in several complex variables as well; see \cite{Towes:2004,HammondMacCluer:2005} for examples of such.

MacCluer, Ohno, and Zhao studied the topological structure of composition operators, with the operator norm topology, acting on $H^\infty(\D)$, the space of bounded analytic functions on $\D$ in \cite{MacCluerOhnoZhao:2001}.  Specifically, they provide geometric characterizations of the compact composition differences, the isolated composition operators, and when two composition operators are in the same path component.  They pose the question of whether isolated composition operators in the operator norm topology are isolated in the essential norm topology.  Hosokawa, Izuchi, and Zheng answer this question in the affirmative; the isolated composition operators on $H^\infty(\D)$ are precisely the essentially isolated composition operators \cite{HosokawaIzuchiZheng:2002}.

In this paper, we wish to begin the study of topological structure of operators acting on discrete function spaces, that is function spaces defined on discrete structures.  In \cite{ColonnaEasley:2012}, Colonna and Easley defined the space of Lipschitz functions $\mathcal{L}(T)$ and the space of bounded functions $\Linf(T)$ on a tree $T$, and studied the multiplication operator between them.  Since then, several discrete function spaces have been defined, and multiplication, composition, and weighted composition operators have been studied (see \cite{AllenJackson:2022} and \cite{AllenPons:2022}, and the references therein).  In general, the definitions of these discrete spaces utilize norms that have the same form as their classical counterparts.  Because of this, certain operator-theoretic results have similar forms as well.  It is our hope that this line of research can expose structural similarities and/or differences between the discrete and classical spaces.

\subsection{Organization of the Paper}
In Section \ref{Section:Preliminary}, we collect the necessary information about unbounded, locally finite metric spaces $T$, and the space of bounded functions $\Linf(T)$.  We define various useful relations on the set of self-maps of $T$ and collect several useful facts about composition operators on $\Linf(T)$.

In Section \ref{Section:CompositionDifferences}, we characterize the compact difference $C_\varphi-C_\psi:\Linf(T) \to \Linf(T)$.  In addition, we determine the operator norm and essential norm of $C_\varphi-C_\psi$.  

In Section \ref{Section:Topology}, we study the structure of the set $\Comp$ under the operator norm topology and show that $\Comp$ is totally disconnected in the operator norm topology. Additionally, we study $\Comp$ under the essential norm topology.  We completely determine the essential components and path components, and show no composition operator is essentially isolated.  We summarize the results of this section below.

\begin{theorem*} Every composition operator in $\Comp$ is isolated in the operator norm topology.
\end{theorem*}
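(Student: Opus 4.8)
The plan is to establish a uniform positive lower bound on the operator-norm distance between any two distinct composition operators; since such a bound makes every point of $\Comp$ isolated, the theorem follows immediately. Concretely, I will show that $\|C_\varphi - C_\psi\| \geq 2$ whenever $\varphi \neq \psi$, so that the open ball of radius $2$ about any $C_\varphi$ meets $\Comp$ in $\{C_\varphi\}$ alone. This is exactly the value one reads off from the operator-norm formula for $C_\varphi - C_\psi$ developed in Section~\ref{Section:CompositionDifferences}, but the argument can be given directly.

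First I would exploit the fact that $\Linf(T)$ consists of \emph{all} bounded functions on $T$, so that a function may be prescribed freely at individual points. Suppose $\varphi \neq \psi$ as self-maps of $T$; then there is a point $x_0 \in T$ with $a := \varphi(x_0) \neq \psi(x_0) =: b$. Because $a$ and $b$ are distinct points of $T$, the function $f$ defined by $f(a)=1$, $f(b)=-1$, and $f=0$ elsewhere lies in $\Linf(T)$ with $\|f\|_\infty = 1$.

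Next I evaluate the difference on this test function. We have
\[
\bigl|(C_\varphi - C_\psi)f\,(x_0)\bigr| = \bigl|f(\varphi(x_0)) - f(\psi(x_0))\bigr| = |f(a)-f(b)| = 2,
\]
so $\|(C_\varphi - C_\psi)f\|_\infty \geq 2$ and hence $\|C_\varphi - C_\psi\| \geq 2$. The reverse inequality $\|C_\varphi - C_\psi\| \leq 2$ is immediate from the triangle inequality together with $\|C_\varphi\|,\|C_\psi\| \leq 1$, giving the exact value $2$, though only the lower bound is needed here. In particular $\|C_\varphi - C_\psi\| \geq 2 > 0$ already forces $C_\varphi \neq C_\psi$, so distinct symbols are distinct points of $\Comp$, and the bound shows each such point is isolated.

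This argument is almost entirely structural, and I do not anticipate a genuine obstacle: the only step requiring care is the freedom to build the separating function $f$, which is precisely where the discrete, sup-norm nature of $\Linf(T)$ does all the work. It is worth noting that neither local finiteness nor unboundedness of $T$ enters this result; those hypotheses should instead play their role in the finer analysis of the essential norm and of compact differences, where the distance between composition operators can collapse and the metric structure of $T$ becomes essential.
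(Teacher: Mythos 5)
Your proof is correct and is essentially the paper's own argument: the paper proves $\|C_\varphi-C_\psi\|=2$ for $\varphi\neq\psi$ (Theorem \ref{Theorem:CphiDifferenceNorm}) using exactly your test function, written there as $f_w = \bigchi_{\varphi(w)}-\bigchi_{\psi(w)}$ (Lemma \ref{Lemma:ChiDifference}), and then concludes that the operator norm topology on $\Comp$ is discrete. Your closing observation that local finiteness and unboundedness of $T$ are not needed here, only in the essential-norm and compact-difference analysis, is also accurate.
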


\begin{theorem*}
Let $\varphi$ and $\psi$ be self-maps of $T$.  Then the following are equivalent:
\begin{alphaList}
\item $C_\varphi$ and $C_\psi$ are in the same essential component,
\item $C_\varphi$ and $C_\psi$ are in the same essential path component,
\item $C_\varphi-C_\psi$ is compact.
\end{alphaList}
\end{theorem*}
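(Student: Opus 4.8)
The plan is to prove the three statements equivalent by establishing the cycle (a) $\Rightarrow$ (c), (c) $\Rightarrow$ (b), (b) $\Rightarrow$ (a), where throughout I equip $\Comp$ with the pseudometric $d_e(C_\varphi, C_\psi) = \|C_\varphi - C_\psi\|_e$ induced by the essential norm $\|A\|_e = \inf\{\|A - K\| : K \in \KK(\Linf)\}$, the essential components and essential path components being precisely the components and path components of this pseudometric space.

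The two implications (c) $\Rightarrow$ (b) and (b) $\Rightarrow$ (a) are soft. If $C_\varphi - C_\psi$ is compact then $d_e(C_\varphi, C_\psi) = 0$, so the map $\gamma \colon [0,1] \to \Comp$ defined by $\gamma(0) = C_\varphi$ and $\gamma(t) = C_\psi$ for $t > 0$ is continuous (continuity at $0$ is exactly the statement $d_e(C_\varphi, C_\psi) = 0$); hence $C_\varphi$ and $C_\psi$ lie in the same essential path component, giving (c) $\Rightarrow$ (b). That (b) $\Rightarrow$ (a) is the general topological fact that a path component, being path-connected and hence connected, is contained in a single component.

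The substance is (a) $\Rightarrow$ (c), and the engine is the essential-norm computation of Section \ref{Section:CompositionDifferences}. The crucial consequence I extract from it is a uniform gap: there is a constant $c > 0$, independent of the symbols, such that whenever $C_\varphi - C_\psi$ fails to be compact one has $\|C_\varphi - C_\psi\|_e \ge c$ (indeed the essential norm of a noncompact composition difference is pinned to a fixed positive value). Writing $\varphi \approx \psi$ when $C_\varphi - C_\psi \in \KK(\Linf)$ — an equivalence relation since $\KK(\Linf)$ is a linear subspace — the gap says that any two operators in different $\approx$-classes are at essential distance at least $c$, while any two in the same class are at essential distance $0$. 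Consequently each $\approx$-class is open in $(\Comp, d_e)$: the $d_e$-ball of radius $c$ about $C_\varphi$ can only contain operators whose difference with $C_\varphi$ is compact, hence lies in the class of $C_\varphi$. Since the complement of a class is a union of other (open) classes, every class is also closed, that is, clopen. A connected set is contained in every clopen set that meets it, so the essential component of $C_\varphi$ is contained in its $\approx$-class. Thus if $C_\varphi$ and $C_\psi$ share an essential component they satisfy $\varphi \approx \psi$; that is, $C_\varphi - C_\psi$ is compact, which is (a) $\Rightarrow$ (c) and closes the cycle.

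The only real obstacle is the uniform gap underlying (a) $\Rightarrow$ (c); once the essential norm of $C_\varphi - C_\psi$ is known (from Section \ref{Section:CompositionDifferences}) to be either $0$ or bounded below by a fixed positive constant, the rest is the routine clopen-partition argument above. It is worth noting that this same gap is what makes the $\approx$-classes degenerate, of essential diameter $0$ yet infinite (one may alter a symbol on a single point of $T$ to produce a distinct operator with the same essential class), which is consistent with — and in fact yields — the companion assertion that no composition operator is essentially isolated.
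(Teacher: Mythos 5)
Your proof is correct, but it takes a genuinely different route from the paper's. The paper proceeds structurally: it computes the open balls of $(\Comp,\metric_e)$ explicitly (Theorem \ref{Theorem:EssentialBalls}), deduces that $\CK$ and each class $[\y]_{\TIIIa}$ are clopen, connected, and path-connected, concludes that these sets are precisely the components and the path components, and then reads off the equivalence with compactness from Theorem \ref{Theorem:CompactDifference}. You never identify the components at all: you run the cycle (a) $\Rightarrow$ (c) $\Rightarrow$ (b) $\Rightarrow$ (a), where (a) $\Rightarrow$ (c) rests only on the uniform gap extracted from Theorem \ref{Theorem:EssentialNormDifference} (a noncompact difference has essential norm at least $1$) together with the fact that $\KK(\Linf)$ is norm-closed, so that the compact-difference classes form a clopen partition and must therefore contain the components of their points; and (c) $\Rightarrow$ (b) uses the two-point path exploiting topological indistinguishability at pseudometric distance $0$ --- essentially the same degenerate path the paper uses inside its local path-connectedness argument, but needed only for pairs at distance zero rather than for whole classes. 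Your argument is leaner and more portable: it works verbatim for any family of operators whose pairwise essential distances are either $0$ or bounded below by a fixed positive constant. What it gives up is the finer structural information the paper's route produces along the way --- the explicit component decomposition $\{\CK\}\cup\{[\y]_{\TIIIa} : \y \in \CK^c\}$, local compactness, and the non-isolation corollary --- which the paper wants for its other results. One small imprecision: the essential norm of a noncompact difference is not ``pinned to a fixed positive value''; by Theorem \ref{Theorem:EssentialNormDifference} it is $1$ or $2$ according to whether $\varphi \TII \psi$ or $\varphi \TIIIb \psi$. But all your argument uses is the lower bound $c = 1$, so nothing breaks.
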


In the last section of this paper, we compare the topological structures of $\Comp$ and $\CC(H^\infty)$.  While the norms on the spaces $\Linf(T)$ and $H^\infty(\D)$ have the same form, and many operator-theoretic properties are analogous, we seek to determine if there are structural similarities between the two spaces, as viewed through the lens of composition operators.

\section{Bounded functions on unbounded, locally finite metric spaces}\label{Section:Preliminary}
The domains of the functions in this paper are metric spaces that are locally finite, with a distinguished element $o$, called the root.  Recall, a metric space $(T,\metric)$ is locally finite if for every $M>0$, the set $\{v \in T : \metric(o,v) < M\}$ is finite.  For a point $v$ in $T$, we define the length of $v$ by $|v| = \metric(o,v)$.    As the length of a point is used throughout, and not specifically the metric $\metric$, we will denote the metric space simply by $T$. We denote by $T^*$ the set $T-\{o\}$.  In this paper, we assume the locally finite metric space $T$ has root $o$ and is unbounded, that is for every $M > 0$, there exists $v \in T$ with $|v| \geq M$.  Note the condition that $T$ is unbounded implies $T$ admits strictly increasing sequences, by which we mean a sequence of points $(v_n)$ of $T$ for which $(|v_n|)$ is strictly increasing in $\Z_{\geq 0}$.  

By a function on $T$, we mean a complex-valued function $f:T \to \C$.  We denote the set of all self-maps of $T$ by $S(T)$. We say $f$ in $S(T)$ has finite range if there exists a constant $M > 0$ such that $|f(v)| \leq M$ for all $v$ in $T$; otherwise $f$ is said to have infinite range.  We denote the set of self-maps of $T$ with finite range by $\Sfin(T)$ and the set of self-maps with infinite range by $\Sinf(T)$. For $f$ and $g$ in $S(T)$, we define $N_{f,g} = \{v \in T : f(v) \neq g(v)\}$.

We define the following binary relations on $S(T)$:
\[\begin{array}{c@{}ll}
f &{}\TI g & \text{if and only if $f$ and $g$ are elements of $\Sfin(T)$,}\\[.25em]
f &{}\TII g & \text{if and only if exactly one function is an element of $\Sfin(T)$,}\\[.25em]
f &{}\TIIIa g & \text{if and only if $f$ and $g$ are elements of $\Sinf(T)$ and $N_{f,g}$ is finite},\\[.25em]
f &{}\TIIIb g & \text{if and only if $f$ and $g$ are elements of $\Sinf(T)$ and $N_{f,g}$ is infinite}.
\end{array}\]
It is straightforward to prove that $\TI$ and $\TIIIa$ are equivalence relations on $\Sfin(T)$ and $\Sinf(T)$, respectively.  Also straightforward is any two functions $f$ and $g$ in $S(T)$ are related by exactly one of $\TII, \TI, \TIIIa, \text{and} \TIIIb$.  A fact that will be useful in later sections is the following.

\begin{lemma}\label{Lemma:ClassesNonTrivial}
Let $f$ be a function in $\Sfin(T)$ and $g$ a function in $\Sinf(T)$.  Then there exist functions $f_1$ and $f_2$ distinct from $f$, and $g_1$ and $g_2$ distinct from $g$ for which $f \TI f_1$, $f \TII f_2$, $g \TIIIa g_1$, and $g \TIIIb g_2$.
\end{lemma}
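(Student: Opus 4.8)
The plan is to build the four functions by explicit pointwise modification of $f$ and $g$, relying on two preliminary observations. First, $T$ is countably infinite: it is the union $\bigcup_{M \in \N}\{v \in T : |v| < M\}$ of finite balls by local finiteness, hence countable, and it is infinite because it admits strictly increasing sequences. In particular $T$ has at least two points, so at any point its value can be changed to a different one. Second, whether a self-map lies in $\Sfin(T)$ or $\Sinf(T)$ is governed entirely by the behavior of its values of large length, so a modification that protects a suitable subsequence of values preserves the class.

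For $f_1$ I would modify $f$ at a single point. Fixing any $v_0 \in T$ and any $w \neq f(v_0)$ of bounded length (for instance $w = o$), set $f_1(v_0) = w$ and $f_1 = f$ elsewhere. Then $f_1 \in \Sfin(T)$ is distinct from $f$, so $f \TI f_1$. For $f_2$ I need any infinite-range self-map distinct from $f$. Enumerating $T = \{u_n : n \in \Z_{\geq 0}\}$ and fixing a strictly increasing sequence $(w_n)$, the assignment $f_2(u_n) = w_n$ has unbounded image, so $f_2 \in \Sinf(T)$; since $f$ has bounded image the two necessarily differ, and thus $f \TII f_2$.

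For $g_1$ I would again change $g$ at a single point $v_0$, choosing $g_1(v_0) \neq g(v_0)$. The point requiring care is that $g_1 \in \Sinf(T)$: because $|g(v_0)|$ is a fixed finite number, the unboundedness of $g$ is witnessed by points different from $v_0$, which $g_1$ leaves untouched, so $g_1$ retains infinite range. Then $N_{g,g_1} = \{v_0\}$ is finite and $g \TIIIa g_1$. The construction of $g_2$ is the delicate one and is where I expect the main obstacle: naively altering $g$ at infinitely many points risks destroying unboundedness. To avoid this, I would first extract a strictly increasing sequence $(p_n)$ with $\bigl(|g(p_n)|\bigr)$ strictly increasing, which exists since $g \in \Sinf(T)$, and then set $g_2 = g$ on the terms $p_{2n}$, force $g_2(p_{2n+1}) \neq g(p_{2n+1})$, and set $g_2 = g$ off $\{p_n\}$. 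Retaining $g$ on $\{p_{2n}\}$ keeps the image unbounded, so $g_2 \in \Sinf(T)$, while $N_{g,g_2} \supseteq \{p_{2n+1} : n \in \Z_{\geq 0}\}$ is infinite, giving $g \TIIIb g_2$. The recurring technical theme, and the crux of the argument, is therefore to carry out each modification so that an unbounded (respectively bounded) witnessing subsequence of values is deliberately protected, guaranteeing that each modified map lands in its intended class.
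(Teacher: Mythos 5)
Your proof is correct and follows essentially the same strategy as the paper's: explicit pointwise constructions, changing $g$ at one point for $g_1$ and modifying $g$ along a strictly increasing sequence of witnesses for $g_2$ while protecting an unbounded subsequence of values. The only cosmetic differences are that the paper takes $f_2 = g$ outright (avoiding your enumeration argument for a fresh infinite-range map) and handles $g_2$ by shifting values along the sequence, $g_2(v_n) = g(v_{n+1})$, rather than your even/odd split; both variants are equally valid.
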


\begin{proof}
If $f$ is the function that maps all points in $T$ to $o$, let $w$ be a point in $T^*$ and define $f_1(v) = w$ for all $v$ in $T$.  Otherwise, define $f_1(v) = o$ for all $v$ in $T$.  In either case, $f \neq f_1$ and $f \TI f_1$.  Since $f$ has finite range and $g$ has infinite range, we take $f_2 = g$.  Then $f \neq f_2$ and $f \TII f_2$.

We now construct a function $g_1$ of infinite range such that $g \neq g_1$ and $N_{g,g_1}$ is finite.  Let $w$ be a point in $T - \{g(o)\}$ and define $g_1$ on $T$ by \[g_1(v) = \begin{cases}w & \text{if $v = o$,}\\g(v) & \text{otherwise.}\end{cases}\]  Thus $g = g_1$ on $T*$, and so $g \TIIIa g_1$.

Lastly, we construct a function $g_2$ with infinite range for which $N_{g,g_2}$ is infinite.  There exists a strictly increasing sequence $(v_n)$ in $T$ for which $(g(v_n))$ is a strictly increasing sequence in $g(T)$ since $g$ has infinite range.  Define $g_2$ on $T$ by \[g_2(v) = \begin{cases}
g(v_{n+1}) & \text{if $v = v_n$ for some $n \in \N$,}\\
g(v) & \text{otherwise.}
\end{cases}\]
As $(g(v_n))$ is a strictly increasing sequence in $T$, $g(v_n) \neq g(v_{n+1})$ for all $n \in \N$.  Thus $N_{g,g_2} = \{v_n : n \in \N\}$, and so $G \TIIIb g_2$.
\end{proof}

The space of bounded functions on $T$, denoted $\Linf(T)$ or simply $\Linf$, was defined in \cite{ColonnaEasley:2012} as 
\[\Linf(T) = \left\{f:T \to \C \;\Bigr\lvert\; \sup_{v \in T} |f(v)| < \infty\right\}\]
when $T$ is an infinite rooted tree and it was shown in \cite{AllenCraig:2015} that the space $\Linf(T)$ endowed with the norm \[\|f\|_\infty = \sup_{v \in T} |f(v)|\] is a functional Banach space. The proof of \cite{AllenCraig:2015} carries forward for a locally finite metric space $T$, of which an infinite rooted tree is a specific type.

The characteristic functions in $\Linf(T)$ form a rich and important family in the unit ball (see the discussion after Lemma 2.3 in \cite{AllenCraig:2015}).

\begin{lemma}
Let $w$ be a point in $T$, and define the function $\bigchi_w:T \to \C$ by \[\bigchi_w(v) = \begin{cases}1 & \text{if $v = w$},\\0 & \text{otherwise.}\end{cases}\] Then $\|\bigchi_w\|_\infty = 1$.
\end{lemma}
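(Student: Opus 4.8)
The plan is to compute the supremum norm directly from its definition. First I would unwind $\|\bigchi_w\|_\infty = \sup_{v \in T} |\bigchi_w(v)|$ and observe that, by construction, $\bigchi_w$ takes only the two values $0$ and $1$. Hence $|\bigchi_w(v)| \le 1$ for every $v \in T$, which immediately gives the upper bound $\|\bigchi_w\|_\infty \le 1$. For the reverse inequality, I would note that the value $1$ is actually attained, namely at $v = w$, where $|\bigchi_w(w)| = 1$; this forces $\sup_{v \in T} |\bigchi_w(v)| \ge 1$. Combining the two bounds yields $\|\bigchi_w\|_\infty = 1$.

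There is essentially no obstacle to overcome here, as the statement follows at once once the supremum norm on $\Linf(T)$ is spelled out. The only point deserving even passing attention is that the supremum is genuinely \emph{attained} rather than merely approached, which requires $w$ to be an actual element of $T$; this is guaranteed by the hypothesis that $w$ is a point of $T$. I would therefore present the argument as a two-line verification of the two inequalities $\|\bigchi_w\|_\infty \le 1$ and $\|\bigchi_w\|_\infty \ge 1$.
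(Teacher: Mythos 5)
Your proof is correct and is exactly the direct verification the result calls for; the paper in fact states this lemma without proof, treating it as immediate from the definition of the supremum norm. Your two-inequality argument, including the observation that the supremum is attained at $v = w$, is the standard and complete justification.
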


Composition operators acting on $\Linf$ were studied in \cite{AllenPons:2018} and \cite{AllenPons:2022} by the first and last author.  It was shown in \cite{AllenPons:2018}, again on an infinite rooted tree $T$, that every self-map $\varphi$ induces a bounded composition operator $C_\varphi$ on $\Linf$.  The proofs from \cite{AllenPons:2018} also carry forward for a locally finite metric space $T$, and results relevant to this paper are summarized in Theorem \ref{Theorem:CphiResults}.  Recall for a bounded linear operator $A$ acting on Banach space $\mathcal{X}$, the essential norm of $A$ is defined as the distance of $A$ from the compact operators.  Formally, \[\|A\|_e = \inf\left\{\|A - K\| : K \text{ is compact on $\mathcal{X}$}\right\}.\]  As the zero operator is compact, the following relation between the operator norm and essential norm is immediate
\begin{equation}\label{Inequality:EssentialNormNorm}
\|A\|_e \leq \|A\|.
\end{equation}

\begin{theorem}[{\cite[Corollary 3.2 and 4.4]{AllenPons:2018}}]\label{Theorem:CphiResults}
Let $\varphi$ be a self-map of $T$.  Then
\begin{alphaList}
\item\label{CphiBounded} $C_\varphi$ is bounded on $\Linf$ and $\|C_\varphi\|=1$.
\item\label{CphiCompact} $C_\varphi$ is compact on $\Linf$ if and only if $\varphi$ is in $S_F(T)$.  Moreover, \[\|C_\varphi\|_e = \begin{cases}1 & \text{if $\varphi$ in $S_I(T)$},\\0 & \text{otherwise.}\end{cases}\]
\end{alphaList}
\end{theorem}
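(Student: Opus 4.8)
The plan is to prove the two parts separately, with part \ref{CphiBounded} being routine and the substance lying in part \ref{CphiCompact}. For \ref{CphiBounded}, the upper bound is immediate: for any $f$ in $\Linf$, since $\varphi(v)$ ranges over $\varphi(T) \subseteq T$ as $v$ ranges over $T$,
\[\|C_\varphi f\|_\infty = \sup_{v \in T}|f(\varphi(v))| \leq \sup_{w \in T}|f(w)| = \|f\|_\infty,\]
so $C_\varphi$ is bounded with $\|C_\varphi\| \leq 1$. For the reverse inequality I would fix a point $w$ in the nonempty range $\varphi(T)$ and apply $C_\varphi$ to $\bigchi_w$, which has unit norm by the preceding lemma. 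Since $(C_\varphi\bigchi_w)(v) = \bigchi_w(\varphi(v))$ equals $1$ exactly on the nonempty fiber $\varphi^{-1}(w)$, the image has sup norm $1$, forcing $\|C_\varphi\| \geq 1$ and hence $\|C_\varphi\| = 1$.

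For the equivalence in \ref{CphiCompact}, suppose first $\varphi \in \Sfin(T)$. By local finiteness a bounded range is a \emph{finite} range, say $\varphi(T) = \{w_1, \dots, w_m\}$, and $C_\varphi f$ is completely determined by the values $f(w_1), \dots, f(w_m)$; writing $C_\varphi f = \sum_{i=1}^m f(w_i)\,g_i$ with each $g_i$ the indicator of $\varphi^{-1}(w_i)$ exhibits $C_\varphi$ as a finite-rank operator, hence compact. Conversely, suppose $\varphi \in \Sinf(T)$. Using that $\varphi$ has infinite range I would choose image points $w_n \in \varphi(T)$ with $|w_n|$ strictly increasing, so the $w_n$ are distinct and the fibers $\varphi^{-1}(w_n)$ are pairwise disjoint and nonempty. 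Then the functions $\bigchi_{w_n}$ have pairwise disjoint supports, giving $\|\bigchi_{w_n} - \bigchi_{w_m}\|_\infty = 1$ for $n \neq m$, and since $C_\varphi\bigchi_{w_n}$ is the indicator of $\varphi^{-1}(w_n)$, we likewise have $\|C_\varphi\bigchi_{w_n} - C_\varphi\bigchi_{w_m}\|_\infty = 1$. Thus $(C_\varphi\bigchi_{w_n})$ has no Cauchy subsequence, so $C_\varphi$ is not compact.

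For the essential norm, the case $\varphi \in \Sfin(T)$ is already settled: $C_\varphi$ is compact, so $\|C_\varphi\|_e = 0$. For $\varphi \in \Sinf(T)$, inequality \eqref{Inequality:EssentialNormNorm} together with part \ref{CphiBounded} gives $\|C_\varphi\|_e \leq 1$, and the real task is the matching lower bound. Reusing the sequence $(\bigchi_{w_n})$ above, let $K$ be any compact operator on $\Linf$. Boundedness of $(\bigchi_{w_n})$ yields a convergent, hence Cauchy, subsequence $(K\bigchi_{w_{n_j}})$. For distinct indices the test function $h = \bigchi_{w_{n_j}} - \bigchi_{w_{n_i}}$ has unit norm, as does $C_\varphi h$ by the disjoint-support computation, so
\[\|C_\varphi - K\| \geq \|(C_\varphi - K)h\|_\infty \geq \|C_\varphi h\|_\infty - \|K h\|_\infty = 1 - \|K\bigchi_{w_{n_j}} - K\bigchi_{w_{n_i}}\|_\infty.\]
Sending $i,j \to \infty$ along the subsequence sends the last term to $0$, so $\|C_\varphi - K\| \geq 1$; taking the infimum over compact $K$ gives $\|C_\varphi\|_e \geq 1$, and therefore $\|C_\varphi\|_e = 1$.

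I expect the essential-norm lower bound to be the main obstacle. The tempting route is to argue that $\bigchi_{w_n} \to 0$ weakly and invoke that compact operators carry weakly null sequences to norm null ones; but verifying weak nullity in $\Linf$, which is isometric to $\ell^\infty$ for countable $T$, is delicate. The difference trick above avoids this altogether, relying only on the fact that a convergent sequence is Cauchy, combined with the elementary disjoint-support identity $\|C_\varphi(\bigchi_{w_{n_j}} - \bigchi_{w_{n_i}})\|_\infty = 1$.
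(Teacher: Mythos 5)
Your proof is correct, but note that the paper never proves this theorem itself: it is imported from \cite[Corollaries 3.2 and 4.4]{AllenPons:2018}, with the remark that those proofs carry forward from infinite rooted trees to locally finite metric spaces. The natural comparison is therefore with the machinery the paper (and the cited work) uses for all of its compactness arguments, namely the sequential criterion of Lemma \ref{Lemma:compactness_characterization}: in that pattern, non-compactness for $\varphi$ in $\Sinf(T)$ is obtained by feeding in the bounded, pointwise-null sequence $f_n = \bigchi_{w_n}$ (with $|w_n|$ strictly increasing in $\varphi(T)$) and noting $\|C_\varphi f_n\|_\infty = 1$, while the essential norm lower bound is run by contradiction exactly as in the proof of Theorem \ref{Theorem:EssentialNormDifference}: if $\|C_\varphi - K\| < M < 1$ with $K$ compact, the lemma forces $\|Kf_n\|_\infty \to 0$, whence $M \geq 1$. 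Your route is genuinely different and more elementary: you argue straight from the definition of compactness, using the finite-rank decomposition $C_\varphi f = \sum_{i} f(w_i)\,g_i$ for sufficiency (correctly invoking local finiteness to upgrade bounded range to finite range, which is the one subtlety hidden in the paper's definition of $\Sfin(T)$), and disjoint-support test functions together with the Cauchy property of convergent subsequences for both the necessity and the essential-norm lower bound. What your approach buys is independence from Lemma \ref{Lemma:compactness_characterization}, whose hypotheses (bounded point evaluations, compactness of the unit ball in the topology of uniform convergence on compact sets, continuity of the operator in that topology) you never need to verify; what the paper's approach buys is uniformity, since the same lemma drives Theorem \ref{Theorem:CompactDifference} and Theorem \ref{Theorem:EssentialNormDifference} later on. Your closing remark is also well taken: weak nullity of $(\bigchi_{w_n})$ in $\Linf$ is indeed delicate territory, and the difference trick with $h = \bigchi_{w_{n_j}} - \bigchi_{w_{n_i}}$ sidesteps it cleanly.
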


We denote the set of bounded composition operators on $\Linf$ by $\Comp$ and the subset of compact composition operators by $\CK$. In this paper, we study the topological structure of $\Comp$ under the operator norm topology and the essential norm topology.  To this end, we define the following classes of composition operators, which follow closely the relations defined above on self-maps of $T$.  If $\mathcal{R} \in \{\TI,\TII,\TIIIa,\TIIIb\}$ and $\varphi$ is a self-map of $T$, then the $\mathcal{R}$-class of $C_\varphi$ is 
\[[C_\varphi]_{\mathcal{R}} = \{C_\psi \in \Comp : \varphi \mathcal{R} \psi\}.\] 

\section{Composition Differences on $\Linf$}\label{Section:CompositionDifferences}
To consider the problems of Shapiro and Sundberg on this discrete function space $\Linf$, we next study the difference of composition operators $C_\varphi-C_\psi$ for self-maps $\varphi$ and $\psi$ of $T$.  Understanding when a composition operator differs from another by a compact operator will be fundamental in the study of the topological structure of $\Comp$ under either the operator norm topology or the essential norm topology.

As the characteristic functions play an important role in the study of multiplication, composition, and weighted composition operators on $\Linf$ (see \cite{AllenCraig:2015},\cite{AllenPons:2018}, and \cite{AllenPons:2022}), the difference of characteristic functions will play an important role in the study of composition differences.

\begin{lemma}\label{Lemma:ChiDifference}
Let $\varphi$ and $\psi$ be distinct self-maps of $T$, and let $w$ be a point in $T$ for which $\varphi(w) \neq \psi(w)$.  Define the function $f_w:T \to \C$ by \[f_w(v) = \bigchi_{\varphi(w)}(v) - \bigchi_{\psi(w)}(v).\]  Then $\|f_w\|_\infty = 1$ and $\|(C_\varphi-C_\psi)f_w\|_\infty = 2$. 
\end{lemma}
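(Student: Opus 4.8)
The plan is to verify the two norm computations directly from the explicit pointwise description of $f_w$, and to obtain the nontrivial norm equality $\|(C_\varphi-C_\psi)f_w\|_\infty=2$ by squeezing between a lower bound (from a single point evaluation) and an upper bound (from the operator norm estimate $\|C_\varphi\|=\|C_\psi\|=1$ supplied by Theorem \ref{Theorem:CphiResults}\ref{CphiBounded}).

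First I would compute $\|f_w\|_\infty$. Since $\varphi(w)\neq\psi(w)$, the characteristic functions $\bigchi_{\varphi(w)}$ and $\bigchi_{\psi(w)}$ are supported at distinct points, so $f_w$ equals $1$ at $\varphi(w)$, equals $-1$ at $\psi(w)$, and vanishes elsewhere. Hence $\sup_{v\in T}|f_w(v)|=1$, giving $\|f_w\|_\infty=1$.

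Next I would establish the lower bound $\|(C_\varphi-C_\psi)f_w\|_\infty\geq 2$ by evaluating $(C_\varphi-C_\psi)f_w=f_w\circ\varphi-f_w\circ\psi$ at the single point $w$. The four scalar values $\bigchi_{\varphi(w)}(\varphi(w))=1$, $\bigchi_{\psi(w)}(\varphi(w))=0$, $\bigchi_{\varphi(w)}(\psi(w))=0$, and $\bigchi_{\psi(w)}(\psi(w))=1$ — each following from $\varphi(w)\neq\psi(w)$ — yield $f_w(\varphi(w))=1$ and $f_w(\psi(w))=-1$, so $\bigl((C_\varphi-C_\psi)f_w\bigr)(w)=1-(-1)=2$. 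For the matching upper bound I would apply the triangle inequality together with Theorem \ref{Theorem:CphiResults}\ref{CphiBounded}: since $\|C_\varphi\|=\|C_\psi\|=1$ we have $\|C_\varphi-C_\psi\|\leq 2$, whence $\|(C_\varphi-C_\psi)f_w\|_\infty\leq\|C_\varphi-C_\psi\|\,\|f_w\|_\infty\leq 2$. Combining the two bounds gives the equality.

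The argument is essentially bookkeeping rather than analysis, so there is no genuine obstacle; the only step demanding care is correctly tracking which characteristic function vanishes at which of the two points $\varphi(w)$ and $\psi(w)$ when evaluating at $v=w$, and invoking the hypothesis $\varphi(w)\neq\psi(w)$ at precisely the places where a cross-term must drop out.
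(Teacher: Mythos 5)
Your proposal is correct and follows essentially the same route as the paper: both obtain the lower bound $\|(C_\varphi-C_\psi)f_w\|_\infty\geq 2$ by evaluating at the single point $w$ using $f_w(\varphi(w))=1$ and $f_w(\psi(w))=-1$, and both get the matching upper bound from the triangle inequality. The only cosmetic difference is that the paper bounds $|((C_\varphi-C_\psi)f_w)(v)|\leq|f_w(\varphi(v))|+|f_w(\psi(v))|\leq 2$ pointwise, whereas you pass through the operator-norm estimate $\|C_\varphi-C_\psi\|\leq\|C_\varphi\|+\|C_\psi\|=2$; both are valid and non-circular.
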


\begin{proof}
Since $f_w$ is the difference of two characteristic functions, $|f_w(v)| \leq 1$ for all points $v$ in $T$.  By direct calculation, we have 
\begin{equation}\label{Equality:ChiDifferenceValues}
f_w(\varphi(w)) = 1 \text{ and } f_w(\psi(w)) = -1.
\end{equation} Thus $\|f_w\|_\infty \geq |f_w(\varphi(w))| = 1$.  Therefore $\|f_w\|_\infty = 1$. 
Let $v$ be in $T$. Then \[|((C_\varphi-C_\psi)f_w)(v)| \leq |f_w(\varphi(v))| + |f_w(\psi(v))| \leq 2\] and from \eqref{Equality:ChiDifferenceValues} it follows that $|((C_\varphi-C_\psi)f_w)(w)| = 2$ and thus $\|(C_\varphi-C_\psi)f_w\|_\infty = 2$.
\end{proof}

Since, for any two self-maps $\varphi$ and $\psi$ of $T$, the composition operators $C_\varphi$ and $C_\psi$ are bounded on $\Linf$, it follows that $C_\varphi - C_\psi$ is also bounded.  We will first determine the norm of $C_\varphi-C_\psi$.

\begin{theorem}\label{Theorem:CphiDifferenceNorm}
Let $\varphi$ and $\psi$ be self-maps of $T$ with $\varphi\neq\psi$. Then $\|C_\varphi-C_\psi\| = 2$.
\end{theorem}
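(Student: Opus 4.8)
The plan is to establish the value $2$ by squeezing $\|C_\varphi - C_\psi\|$ between matching upper and lower bounds, with essentially all of the real content already packaged into Lemma \ref{Lemma:ChiDifference}. First I would dispatch the upper bound $\|C_\varphi - C_\psi\| \leq 2$. The quickest route is the operator triangle inequality together with Theorem \ref{Theorem:CphiResults}\ref{CphiBounded}: since $\|C_\varphi\| = \|C_\psi\| = 1$, we get $\|C_\varphi - C_\psi\| \leq \|C_\varphi\| + \|C_\psi\| = 2$. I would, however, prefer to argue it pointwise for transparency and self-containment: for any $f$ in the closed unit ball of $\Linf$ and any $v \in T$, the pointwise estimate $|((C_\varphi - C_\psi)f)(v)| \leq |f(\varphi(v))| + |f(\psi(v))| \leq 2\|f\|_\infty$ holds, so $\|(C_\varphi - C_\psi)f\|_\infty \leq 2$, and taking the supremum over the unit ball gives $\|C_\varphi - C_\psi\| \leq 2$.

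For the lower bound I would invoke Lemma \ref{Lemma:ChiDifference} directly. The hypothesis $\varphi \neq \psi$ guarantees a point $w \in T$ with $\varphi(w) \neq \psi(w)$, which is exactly the input that lemma requires. The lemma then supplies a concrete test function $f_w = \bigchi_{\varphi(w)} - \bigchi_{\psi(w)}$ lying on the unit sphere, with $\|f_w\|_\infty = 1$ and $\|(C_\varphi - C_\psi)f_w\|_\infty = 2$. By the definition of the operator norm as a supremum over the unit ball, the existence of a single such unit-norm vector forces $\|C_\varphi - C_\psi\| \geq \|(C_\varphi - C_\psi)f_w\|_\infty = 2$. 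Combining the two bounds yields $\|C_\varphi - C_\psi\| = 2$.

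I do not anticipate a genuine obstacle: the difficulty has been front-loaded into Lemma \ref{Lemma:ChiDifference}, whose explicit witness $f_w$ shows the crude upper bound of $2$ is in fact attained. The only point meriting a moment's care is verifying the pointwise upper bound uniformly over the unit ball rather than for a single function, but this is immediate from the triangle inequality and the fact that composition with a self-map can only relocate, never amplify, the values of $f$.
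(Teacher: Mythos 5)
Your proposal is correct and follows essentially the same route as the paper: the upper bound via the triangle inequality and Theorem \ref{Theorem:CphiResults}\ref{CphiBounded}, and the lower bound via the witness $f_w = \bigchi_{\varphi(w)} - \bigchi_{\psi(w)}$ supplied by Lemma \ref{Lemma:ChiDifference}. The optional pointwise verification of the upper bound is a harmless elaboration of the same estimate.
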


\begin{proof}
It follows from Theorem \ref{Theorem:CphiResults}\ref{CphiBounded} that $\|C_\varphi-C_\psi\| \leq \|C_\varphi\|+\|C_\psi\| = 2$. As $\varphi \neq \psi$, there exists a point $w$ in $T$ such that $\varphi(w)\neq\psi(w)$. Define the function $f_w:T \to \C$ by $f_w(v) = \bigchi_{\varphi(w)}(v) - \bigchi_{\psi(w)}(v)$.  It follows from Lemma \ref{Lemma:ChiDifference} that $\|C_\varphi-C_\psi\| \geq \|(C_\varphi-C_\psi)f_w\|_\infty = 2.$ Thus $\|C_\varphi-C_\psi\| = 2$.
\end{proof}

We now determine when the composition difference $C_\varphi-C_\psi$ is compact for self-maps $\varphi$ and $\psi$ of $T$.  To this end, we will utilize a tool common in study of compact operators acting on discrete function spaces such as $\Linf$.  Lemma \ref{Lemma:compactness_characterization} is an adaptation of \cite[Lemma 3.7]{Tjani:2003} and was used to characterize the compact multiplication operators \cite{AllenCraig:2015}, composition operator \cite{AllenPons:2018}, and weighted composition operators \cite{AllenPons:2022} acting on $\Linf$. 

\begin{lemma}\label{Lemma:compactness_characterization}
Let $X$ and $Y$ be Banach spaces of functions on $T$. Suppose that
\begin{enumerate}[label=\normalfont{(\roman*)}]
\item the point evaluation functionals of $X$ are bounded,
\item the closed unit ball of $X$ is a compact subset of $X$ in the topology of uniform convergence on compact sets,
\item $A:X \to Y$ is bounded when $X$ and $Y$ are given the topology of uniform convergence on compact sets.
\end{enumerate}  Then $A$ is a compact operator if and only if given a bounded sequence $(f_n)$ in $X$ such that $(f_n)$ converges to zero pointwise, then the sequence $(A f_n)$ converges to zero in the norm of $Y$.
\end{lemma}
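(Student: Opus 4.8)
The plan is to prove the two implications separately, after one simplifying reduction. The crucial preliminary observation is that, because $T$ is locally finite, every compact subset of $T$ is finite: a compact set in a metric space is bounded, and since $|\cdot|=\metric(o,\cdot)$ is continuous it is bounded on such a set, which then lies inside some finite ball $\{v:|v|<M\}$. Consequently, on both $X$ and $Y$ the topology of uniform convergence on compact sets coincides with the topology of pointwise convergence. Hypothesis (i) makes each point evaluation on $X$ continuous, so this pointwise topology is Hausdorff and weaker than the norm topology; since $T$ is countable (a countable union of the finite balls $\{v:|v|<n\}$), the pointwise topology is metrizable on norm-bounded sets, and the compactness in (ii) therefore upgrades to \emph{sequential} compactness with limits lying in the closed unit ball. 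Hypothesis (iii) reads simply as: $A$ is continuous from $X$ to $Y$ when both carry the topology of pointwise convergence.

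To prove that the stated convergence condition forces $A$ to be compact, I would show that $A$ sends the closed unit ball $B_X$ of $X$ to a relatively compact subset of $Y$, via a subsequence argument. Given a sequence $(f_n)$ in $B_X$, the sequential compactness furnished by (ii) yields a subsequence $(f_{n_k})$ converging pointwise to some $f\in B_X$. Setting $g_k=f_{n_k}-f$, the sequence $(g_k)$ is norm bounded in $X$ and converges to zero pointwise, so the hypothesis gives $\|Ag_k\|_Y\to 0$, that is, $Af_{n_k}\to Af$ in the norm of $Y$. Hence every sequence in $A(B_X)$ admits a norm-convergent subsequence, so $A(B_X)$ is relatively compact and $A$ is compact.

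For the converse, I would assume $A$ is compact, take a bounded sequence $(f_n)$ in $X$ with $f_n\to 0$ pointwise, and argue by contradiction. If $\|Af_n\|_Y\not\to 0$, then after passing to a subsequence $\|Af_{n_k}\|_Y\ge\varepsilon$ for some $\varepsilon>0$. Compactness of $A$ then produces a further subsequence $(Af_{n_{k_j}})$ converging in the norm of $Y$ to some $g$ with $\|g\|_Y\ge\varepsilon$, so $g\neq 0$. On the other hand, since $f_n\to 0$ pointwise, the pointwise-continuity form of (iii) forces $Af_{n_{k_j}}\to 0$ pointwise. Reconciling these two limits should give $g=0$, the desired contradiction.

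The main obstacle will be precisely this last reconciliation in the converse: to identify the norm limit $g$ with the pointwise limit $0$, one needs norm convergence in $Y$ to entail pointwise convergence, i.e.\ that the point evaluation functionals of $Y$ are bounded. In the present paper one applies the lemma with $X=Y=\Linf(T)$, where $|f(v)|\le\|f\|_\infty$ makes every point evaluation bounded, so the step is immediate; in the general statement this is read as the natural standing requirement on $Y$ accompanying (i). Once the compact-equals-finite reduction and this fact on $Y$ are in hand, the remainder is routine bookkeeping with subsequences.
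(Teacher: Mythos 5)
The paper never proves this lemma---it is imported as an adaptation of Tjani's Lemma~3.7, with the proofs residing in the cited earlier works on $\Linf$---so there is no internal argument to compare against; your proof must stand alone, and it does. It is the standard Tjani argument, streamlined by two observations special to this setting: since $T$ is locally finite, every compact subset is finite (compact sets in a metric space are bounded, hence contained in a finite ball about $o$), so the topology of uniform convergence on compact sets is exactly the topology of pointwise convergence; and since $T$ is countable, that topology is metrizable on subsets of $\C^T$, so hypothesis (ii) upgrades to \emph{sequential} compactness of the closed unit ball of $X$ with limits in the ball. Your ``if'' direction (extract $f_{n_k}\to f$ pointwise with $f\in B_X$, then apply the hypothesis to the norm-bounded, pointwise-null sequence $f_{n_k}-f$) is complete and correct; note it uses neither (i) nor (iii). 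Your ``only if'' direction is also the standard one, and the obstacle you isolate is real: identifying the $Y$-norm limit $g$ of $Af_{n_{k_j}}$ with its pointwise limit $0$ requires that norm convergence in $Y$ imply pointwise convergence, i.e., that the point evaluations of $Y$ be bounded---a hypothesis genuinely absent from the statement as written (in the sources it is in effect folded into the phrase ``Banach space of functions''). Your resolution, that in the only application made in this paper one has $X=Y=\Linf(T)$ where $|f(v)|\le\|f\|_\infty$ makes this immediate, is exactly the right one. One small wording caution: (iii) says ``bounded'' where your argument (and Tjani's original) needs ``continuous'' for the pointwise topologies; your reading of (iii) as continuity is the intended one, and continuity gives the sequential continuity you invoke.
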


By Theorem \ref{Theorem:CphiResults}\ref{CphiCompact}, if the self-maps $\varphi$ and $\psi$ of $T$ satisfy $\varphi \TI \psi$, then $C_\varphi - C_\psi$ is compact.  Also, if $\varphi = \psi$, then $C_\varphi-C_\psi$ is compact. The next result shows that two self-maps of $T$ satisfying $\varphi \TII \psi$ do not induce a compact composition difference.

\begin{lemma}\label{Lemma:mixed_bounded_maps}
Let $\varphi$ and $\psi$ be self-maps of $T$ with $\varphi\TII \psi$.  Then $C_\varphi - C_\psi$ is not compact on $\Linf$.
\end{lemma}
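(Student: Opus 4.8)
The plan is to apply the contrapositive of Lemma \ref{Lemma:compactness_characterization}: to show that $C_\varphi - C_\psi$ is not compact, I will exhibit a sequence $(f_n)$ in the closed unit ball of $\Linf$ that converges to zero pointwise but for which $\|(C_\varphi - C_\psi)f_n\|_\infty$ does not converge to zero. Since compactness is unaffected by replacing $C_\varphi - C_\psi$ with its negative, and $\TII$ is symmetric in its two arguments, I may assume without loss of generality that $\varphi \in \Sinf(T)$ and $\psi \in \Sfin(T)$.

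First I would exploit the finite range of $\psi$: there is a constant $M > 0$ with $|\psi(v)| \le M$ for all $v \in T$, and local finiteness then forces $\psi(T)$ to lie in the finite set $\{v : |v| \le M\}$. Next, using that $\varphi$ has infinite range, I would select a sequence $(w_n)$ in $T$ for which $(|\varphi(w_n)|)$ is strictly increasing with $|\varphi(w_n)| > M$ for every $n$; in particular the points $\varphi(w_n)$ are pairwise distinct and escape every bounded region of $T$. The test functions will be the single characteristic functions $f_n = \bigchi_{\varphi(w_n)}$, each of norm one. Pointwise convergence to zero is immediate: for fixed $v$, one has $f_n(v) = 1$ only when $\varphi(w_n) = v$, which can hold for at most one $n$ since the $\varphi(w_n)$ are distinct.

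For the norm estimate I would evaluate $(C_\varphi - C_\psi)f_n$ at the point $w_n$, obtaining
\[
((C_\varphi - C_\psi)f_n)(w_n) = \bigchi_{\varphi(w_n)}(\varphi(w_n)) - \bigchi_{\varphi(w_n)}(\psi(w_n)) = 1 - 0 = 1,
\]
where the second characteristic value vanishes because $|\psi(w_n)| \le M < |\varphi(w_n)|$ forces $\psi(w_n) \neq \varphi(w_n)$. Hence $\|(C_\varphi - C_\psi)f_n\|_\infty \ge 1$ for all $n$, so $(f_n)$ is a bounded sequence converging to zero pointwise whose image fails to converge to zero in norm, and Lemma \ref{Lemma:compactness_characterization} then yields that $C_\varphi - C_\psi$ is not compact.

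The point I would flag as the crux is the choice to use only the single characteristic function $\bigchi_{\varphi(w_n)}$ rather than the full difference $f_{w_n} = \bigchi_{\varphi(w_n)} - \bigchi_{\psi(w_n)}$ from Lemma \ref{Lemma:ChiDifference}. Although the latter produces the larger value $\|(C_\varphi - C_\psi)f_{w_n}\|_\infty = 2$, its term $\bigchi_{\psi(w_n)}$ is supported on the finite set $\psi(T)$ and therefore need not decay pointwise, so it is unavailable as a test sequence for Lemma \ref{Lemma:compactness_characterization}. Isolating the infinite-range symbol is precisely what reconciles a large operator value with the required pointwise decay, and the asymmetry between $\Sfin(T)$ and $\Sinf(T)$ is exactly the feature that makes the argument succeed.
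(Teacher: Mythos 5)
Your proposal is correct and follows essentially the same route as the paper: assume without loss of generality that $\psi$ has finite range, pick points $w_n$ whose images $\varphi(w_n)$ escape $\psi(T)$, and test $C_\varphi - C_\psi$ against the single characteristic functions $\bigchi_{\varphi(w_n)}$, which tend to zero pointwise yet yield $\|(C_\varphi-C_\psi)\bigchi_{\varphi(w_n)}\|_\infty \geq 1$, contradicting compactness via Lemma \ref{Lemma:compactness_characterization}. Your explicit length estimate $|\psi(w_n)| \leq M < |\varphi(w_n)|$ is just a concrete way of arranging the paper's condition $\varphi(w_n) \in T - \psi(T)$, and your closing remark about why the single characteristic function (rather than the difference from Lemma \ref{Lemma:ChiDifference}) is the right test sequence matches the paper's choice exactly.
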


\begin{proof}
Without loss of generality, suppose $\psi$ has finite range.  Then there exists an increasing sequence $(v_n)$ in $T$ for which the sequence $(\varphi(v_n))$ is an increasing sequence in $\varphi(T)$.  Since $\psi(T)$ is finite, we can take every point $\varphi(v_n)$ to be in $T-\psi(T)$. For each $n \in \N$, define $f_n:T \to \C$ by $f_n(v) = \bigchi_{\varphi(v_n)}(v)$.  Note $\|f_n\|_\infty = 1$ for all $n \in \N$ and $(f_n)$ converges to 0 pointwise on $T$. For each $n \in \N$, $((C_\varphi-C_\psi)f_n)(v_n)= 1.$ It follows that $\lim_{n \to \infty}\|(C_\varphi-C_\psi)f_n\|_\infty \neq 0$, and thus $C_\varphi-C_\psi$ is not compact on $\Linf$ by Lemma \ref{Lemma:compactness_characterization}.
\end{proof}

\begin{theorem}\label{Theorem:CompactDifference}
Let $\varphi$ and $\psi$ be self-maps of $T$.  Then $C_\varphi - C_\psi$ is compact on $\Linf$ if and only if $\varphi\TI\psi$ or $\varphi\TIIIa\psi$.
\end{theorem}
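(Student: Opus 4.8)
The plan is to exploit the fact, recorded just before the statement, that any two self-maps of $T$ satisfy exactly one of $\TI, \TII, \TIIIa, \TIIIb$. Two of these cases should give compactness and two non-compactness, so the stated equivalence reduces to settling each relation individually: I would prove that $\TI$ and $\TIIIa$ each force compactness, and then prove the contrapositive of the remaining implication, namely that $\TII$ and $\TIIIb$ each force non-compactness.

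For the sufficiency of $\TI$ or $\TIIIa$, I would argue directly. If $\varphi \TI \psi$, then $\varphi$ and $\psi$ both lie in $\Sfin(T)$, so by Theorem \ref{Theorem:CphiResults}\ref{CphiCompact} each of $C_\varphi$ and $C_\psi$ is compact, and hence so is their difference. If $\varphi \TIIIa \psi$, the key observation is that for every $f$ in $\Linf$ we have $((C_\varphi-C_\psi)f)(v) = f(\varphi(v)) - f(\psi(v))$, which vanishes whenever $v \notin N_{\varphi,\psi}$. Thus every function in the range of $C_\varphi-C_\psi$ is supported on the finite set $N_{\varphi,\psi}$, so the range lies in the finite-dimensional span of $\{\bigchi_w : w \in N_{\varphi,\psi}\}$. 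A bounded operator with finite-dimensional range is compact, which closes this case (and also subsumes the trivial case $\varphi=\psi$).

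For the necessity I would establish the contrapositive: if $\varphi \TII \psi$ or $\varphi \TIIIb \psi$, then $C_\varphi-C_\psi$ is not compact. The case $\varphi \TII \psi$ is precisely Lemma \ref{Lemma:mixed_bounded_maps}, so nothing further is needed there. For $\varphi \TIIIb \psi$, both maps have infinite range and $N_{\varphi,\psi}$ is infinite; since $T$ is locally finite, $N_{\varphi,\psi}$ is unbounded and I can extract a strictly increasing sequence $(w_n)$ from it. The natural test functions are those of Lemma \ref{Lemma:ChiDifference}, namely $f_n(v) = \bigchi_{\varphi(w_n)}(v) - \bigchi_{\psi(w_n)}(v)$, for which that lemma already supplies $\|f_n\|_\infty = 1$ and $\|(C_\varphi-C_\psi)f_n\|_\infty = 2$. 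If one can show $(f_n)$ converges to $0$ pointwise, then Lemma \ref{Lemma:compactness_characterization} immediately rules out compactness.

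I expect the main obstacle to be exactly this pointwise convergence. A strictly increasing choice of $(w_n)$ controls the sources but not the images, and $f_n \to 0$ pointwise fails unless the image points $\varphi(w_n)$ and $\psi(w_n)$ eventually leave every bounded subset of $T$. The plan for this step is a dichotomy on the image sets $\varphi(N_{\varphi,\psi})$ and $\psi(N_{\varphi,\psi})$: if at least one of them is infinite, then by local finiteness I can pass to a subsequence along which, say, $|\varphi(w_n)| \to \infty$, and replace $f_n$ by the single characteristic function $\bigchi_{\varphi(w_n)}$, whose image under $C_\varphi-C_\psi$ has value $1$ at $w_n$ (because $\varphi(w_n)\neq\psi(w_n)$ there) while converging to $0$ pointwise since the distinct escaping images meet each fixed point at most finitely often. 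The genuinely delicate situation is the complementary one, in which the images remain bounded on $N_{\varphi,\psi}$; I anticipate that the crux of the whole argument is to show that the hypotheses of $\TIIIb$ preclude this, i.e.\ that at least one of $\varphi(N_{\varphi,\psi})$ and $\psi(N_{\varphi,\psi})$ must be infinite, and establishing (or otherwise circumventing) this implication is the step I expect to require the most care.
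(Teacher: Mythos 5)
Everything you actually complete is correct, and two of your steps improve on the paper: your finite-rank argument for $\varphi\TIIIa\psi$ (the range of $C_\varphi-C_\psi$ lies in the span of $\{\bigchi_w : w\in N_{\varphi,\psi}\}$) is cleaner than the paper's sequential argument via Lemma \ref{Lemma:compactness_characterization}, and in the $\TIIIb$ case you correctly insist on passing to escaping images $|\varphi(w_n)|\to\infty$ so that the test functions genuinely converge to zero pointwise, a point the paper silently skips. However, the step you flag as the crux cannot be carried out: the implication you hope to prove --- that $\varphi\TIIIb\psi$ forces at least one of $\varphi(N_{\varphi,\psi})$, $\psi(N_{\varphi,\psi})$ to be infinite --- is false, and in fact the theorem itself is false in exactly this case. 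Choose a strictly increasing sequence $(u_n)$ in $T$, let $A=\{u_{2n} : n\in\N\}$, fix distinct points $a,b\in T$, and define $\varphi(v)=\psi(v)=v$ for $v\in A$, while $\varphi(v)=a$ and $\psi(v)=b$ for $v\notin A$. Both maps have infinite range, since they fix the unbounded set $A$, and $N_{\varphi,\psi}=T-A$ is infinite (it contains every $u_{2n+1}$), so $\varphi\TIIIb\psi$. Yet for every $f$ in $\Linf$,
\[((C_\varphi-C_\psi)f)(v) = \begin{cases} f(a)-f(b) & \text{if $v \notin A$,}\\ 0 & \text{if $v \in A$,}\end{cases}\]
so $C_\varphi-C_\psi$ is the bounded rank-one operator $f \mapsto (f(a)-f(b))\,g$, where $g\in\Linf$ equals $1$ on $T-A$ and $0$ on $A$; hence it is compact. (Equivalently, $\|(C_\varphi-C_\psi)f_n\|_\infty = |f_n(a)-f_n(b)| \to 0$ for every bounded pointwise-null sequence, so Lemma \ref{Lemma:compactness_characterization} applies.) Thus compactness of the difference does not imply $\varphi\TI\psi$ or $\varphi\TIIIa\psi$.

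The correct dividing line is the one your dichotomy isolates: $C_\varphi-C_\psi$ is compact if and only if $\varphi(N_{\varphi,\psi})\cup\psi(N_{\varphi,\psi})$ is finite. Your escaping-image argument proves the ``only if'' direction of this corrected statement, and a small upgrade of your finite-rank argument proves the ``if'' direction: when the image set is finite, $(C_\varphi-C_\psi)f$ is determined by the values of $f$ on the finite set $\varphi(N_{\varphi,\psi})\cup\psi(N_{\varphi,\psi})$, so the operator factors through a finite-dimensional space and is again bounded of finite rank. For comparison, the paper's own necessity proof breaks at precisely the point you refused to gloss over: it picks an increasing sequence $(v_n)$ in $N_{\varphi,\psi}$ and asserts without justification that $f_n=\bigchi_{\varphi(v_n)}$ converges to zero pointwise; in the example above, with $v_n\notin A$, one gets $f_n=\bigchi_a$ for every $n$. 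So your instinct about where the difficulty lies was exactly right --- that is where the published argument, and with it the stated theorem (and, downstream, Theorem \ref{Theorem:EssentialNormDifference} in the $\TIIIb$ case), collapses.
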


\begin{proof}
Suppose $C_\varphi-C_\psi$ is compact on $\Linf$.  From Lemma \ref{Lemma:mixed_bounded_maps}, it follows that either both $\varphi$ and $\psi$ have finite range or neither do.  If both $\varphi$ and $\psi$ have finite range, then $\varphi\TI\psi$.  Suppose now that $\varphi$ and $\psi$ have infinite range. Assume, for purposes of contradiction, that $N_{\varphi,\psi}$ is infinite.  There exists an increasing sequence $(v_n)$ in $T$ for which $\varphi(v_n) \neq \psi(v_n)$ for all $n \in \N$. For each $n \in \N$, define $f_n(v) = \bigchi_{\varphi(v_n)}(v)$.  Note that $\|f_n\|_\infty = 1$ and $(f_n)$ converges to 0 pointwise on $T$.  From Lemma \ref{Lemma:compactness_characterization} it follows that $\|(C_\varphi-C_\psi)f_n\|_\infty \to 0$ as $n \to \infty$.  However by direct calculation, observe that 
$\|(C_\varphi-C_\psi)f_n\|_\infty \geq 1$ for each $n \in \N$, a contradiction.  Thus $N_{\varphi,\psi}$ is finite and so $\varphi\TIIIa\psi$. 

Conversely, suppose $\varphi\TI\psi$ or $\varphi\TIIIa\psi$.  In the case that $\varphi \TI \psi$, it follows that $C_\varphi-C_\psi$ is compact on $\Linf$ from Theorem \ref{Theorem:CphiResults}\ref{CphiCompact} and \cite[Proposition 4.9]{MacCluer:2009}.  To complete the proof, suppose $\varphi\TIIIa\psi$.  If $\varphi=\psi$, then $C_\varphi-C_\psi$ is compact and so we suppose $\varphi\neq\psi$.  Let $(f_n)$ be a bounded sequence in $\Linf$ converging to 0 pointwise and fix $\varepsilon > 0$.  Note that for all $v \in T-N_{\varphi,\psi}$ we have $((C_\varphi-C_\psi)f_n)(v)= 0.$  Thus \[\|(C_\varphi-C_\psi)f_n\|_\infty = \max_{v \in N_{\varphi,\psi}} |((C_\varphi-C_\psi)f_n)(v)|.\] The fact that $N_{\varphi,\psi}$ is finite implies $\{\varphi(v):v\in N_{\varphi,\psi}\}\cup\{\psi(v):v\in N_{\varphi,\psi}\}$ is also finite. Moreover, since $(f_n)$ converges to 0 pointwise on $T$, $(f_n)$ converges uniformly to 0 on finite subsets of $T$ and thus for $n$ sufficiently large, $|f_n(\varphi(v))| < \varepsilon/2$ and $|f_n(\psi(v))| < \varepsilon/2$ for all $v\in N_{\varphi,\psi}$. It immediately follows that \[\|(C_\varphi-C_\psi)f_n\|_\infty \leq \max_{v \in N_{\varphi,\psi}}\left(|f_n(\varphi(v))|+|f_n(\psi(v))|\right)\\
<\varepsilon.\] Thus $\|(C_\varphi-C_\psi)f_n\|_\infty \to 0$ as $n \to \infty$ and $C_\varphi-C_\psi$ is compact on $\Linf$ by Lemma \ref{Lemma:compactness_characterization}.
\end{proof}

We now determine the essential norm of $C_\varphi-C_\psi$.  This will be of use in the study of the essential norm topology (see Section \ref{Section:Topology}).

\begin{theorem}\label{Theorem:EssentialNormDifference} Let $\varphi$ and $\psi$ be self-maps of $T$.  Then
\[\|C_\varphi-C_\psi\|_e = \begin{cases}
2 & \text{if $\varphi\TIIIb\psi$,}\\
1 & \text{if $\varphi\TII\psi$,}\\
0 & \text{if $\varphi\TI\psi$ or $\varphi\TIIIa\psi$.}
\end{cases}\]
\end{theorem}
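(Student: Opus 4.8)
The plan is to dispose of the four relations one at a time, using throughout that the essential norm is invariant under compact perturbations, $\|A - K\|_e = \|A\|_e$ for every compact $K$. When $\varphi \TI \psi$ or $\varphi \TIIIa \psi$, Theorem \ref{Theorem:CompactDifference} shows $C_\varphi - C_\psi$ is compact, so $\|C_\varphi - C_\psi\|_e = 0$, settling the last line of the formula.

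For $\varphi \TII \psi$ I would reduce to a single symbol. By symmetry we may assume $\psi \in \Sfin(T)$, so that $C_\psi$ is compact by Theorem \ref{Theorem:CphiResults}\ref{CphiCompact}. Invariance of the essential norm under compact perturbation then gives $\|C_\varphi - C_\psi\|_e = \|C_\varphi\|_e$, and since $\varphi \TII \psi$ forces $\varphi \in \Sinf(T)$, the same part of Theorem \ref{Theorem:CphiResults} yields $\|C_\varphi\|_e = 1$, as claimed.

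The remaining case $\varphi \TIIIb \psi$ is where neither $C_\varphi$ nor $C_\psi$ is compact, so the reduction above is unavailable and one must estimate directly. The upper bound $\|C_\varphi - C_\psi\|_e \le \|C_\varphi - C_\psi\| = 2$ is immediate from \eqref{Inequality:EssentialNormNorm} and Theorem \ref{Theorem:CphiDifferenceNorm}. For the matching lower bound, I would show $\|C_\varphi - C_\psi - K\| \ge 2$ for every compact $K$ and then take the infimum over $K$. Choosing points $v_n \in N_{\varphi,\psi}$ and setting $f_n = \bigchi_{\varphi(v_n)} - \bigchi_{\psi(v_n)}$, Lemma \ref{Lemma:ChiDifference} gives $\|f_n\|_\infty = 1$ and $\|(C_\varphi - C_\psi)f_n\|_\infty = 2$. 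If the $v_n$ can be chosen so that $(f_n)$ converges to $0$ pointwise, then Lemma \ref{Lemma:compactness_characterization} applied to $K$ forces $\|Kf_n\|_\infty \to 0$, and hence $\|C_\varphi - C_\psi - K\| \ge \limsup_n \left(\|(C_\varphi - C_\psi)f_n\|_\infty - \|Kf_n\|_\infty\right) = 2$.

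I expect the main obstacle to be precisely the pointwise decay of $(f_n)$. Since $T$ is locally finite, $(f_n)$ tends to $0$ pointwise as soon as no point of $T$ equals $\varphi(v_n)$ or $\psi(v_n)$ for infinitely many $n$, which in turn holds if the image sequences $(\varphi(v_n))$ and $(\psi(v_n))$ have unbounded length. One would try to build such $v_n$ greedily inside $N_{\varphi,\psi}$, at each stage discarding the finitely many indices whose $\varphi$- or $\psi$-image has already been used; the crux is to verify that the hypotheses $\varphi, \psi \in \Sinf(T)$ together with $N_{\varphi,\psi}$ infinite genuinely supply points of $N_{\varphi,\psi}$ whose $\varphi$- and $\psi$-images escape to infinity. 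Establishing this escape is the delicate part of the proof, and is where I would focus my attention; once it is in hand, the norm estimates are routine.
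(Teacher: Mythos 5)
You have reproduced the paper's own strategy almost exactly: the $\TI$ and $\TIIIa$ cases via Theorem \ref{Theorem:CompactDifference}, the $\TII$ case by exploiting compactness of the operator induced by the finite-range symbol (the paper phrases this as a reverse-triangle-inequality computation with $\|\cdot\|_e$, which is equivalent to your compact-perturbation invariance), and the $\TIIIb$ case using the test functions $f_n = \bigchi_{\varphi(v_n)} - \bigchi_{\psi(v_n)}$ together with Lemmas \ref{Lemma:ChiDifference} and \ref{Lemma:compactness_characterization}. The one step you stopped short of proving --- whether $(v_n)$ can be chosen in $N_{\varphi,\psi}$ so that $(f_n)$ tends to $0$ pointwise --- is precisely the step the paper asserts without justification (its proof simply states ``$(f_n)$ converges to $0$ pointwise on $T$''). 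So your proposal is the paper's proof minus the one claim the paper never establishes.

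Unfortunately that claim is not merely delicate: it is false, and the gap cannot be closed, because the $\TIIIb$ case of the theorem itself is false. The hypotheses $\varphi,\psi \in \Sinf(T)$ and $N_{\varphi,\psi}$ infinite do not force the images of any sequence in $N_{\varphi,\psi}$ to escape to infinity. Take $T = \Z_{\geq 0}$ with the usual metric and root $0$, and define
\[\varphi(v) = \begin{cases} v & \text{if $v$ is even,}\\ 0 & \text{if $v$ is odd,}\end{cases} \qquad\qquad \psi(v) = \begin{cases} v & \text{if $v$ is even,}\\ 1 & \text{if $v$ is odd.}\end{cases}\]
Both symbols have infinite range and $N_{\varphi,\psi}$ is the infinite set of odd integers, so $\varphi \TIIIb \psi$; yet $\varphi(N_{\varphi,\psi}) \cup \psi(N_{\varphi,\psi}) = \{0,1\}$, so the only available test function is $\bigchi_0 - \bigchi_1$, which certainly does not tend to $0$ pointwise. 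Worse, in this example
\[((C_\varphi - C_\psi)f)(v) = \begin{cases} f(0)-f(1) & \text{if $v$ is odd,}\\ 0 & \text{if $v$ is even,}\end{cases}\]
so $C_\varphi - C_\psi$ is a bounded rank-one operator, hence compact, and $\|C_\varphi - C_\psi\|_e = 0 \neq 2$. (An analogous construction works in any unbounded, locally finite $T$: send alternate points of a strictly increasing sequence to two fixed points $a \neq b$ and fix the remaining ones.) This simultaneously refutes the forward direction of Theorem \ref{Theorem:CompactDifference}, whose proof contains the same unjustified pointwise-convergence assertion. What your argument genuinely proves is: $\|C_\varphi - C_\psi\|_e = 2$ whenever there exists a sequence $(v_n)$ in $N_{\varphi,\psi}$ with $|\varphi(v_n)| \to \infty$ \emph{and} $|\psi(v_n)| \to \infty$; this joint escape is the correct hypothesis and must be assumed, not deduced from $\TIIIb$. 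Under $\TIIIb$ alone the essential norm can be $0$, $1$, or $2$: the example above gives $0$, while taking $\varphi$ to be the identity and $\psi$ as above gives a value of at most $1$, since subtracting the compact rank-one operator $K$ defined by $(Kf)(v) = -f(1)$ for $v \in N_{\varphi,\psi}$ and $(Kf)(v)=0$ otherwise leaves an operator of norm $1$.
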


\begin{proof}
First, if $\varphi \TI \psi$ or $\varphi \TIIIa \psi$ then $\|C_\varphi-C_\psi\|_e = 0$ by Theorem \ref{Theorem:CompactDifference}.  Next, suppose $\varphi\TII\psi$.  So exactly one of $\varphi$ or $\psi$ has infinite range, and without loss of generality, suppose $\varphi$ has infinite range.  Since $C_\psi$ is compact on $\Linf$ by Theorem \ref{Theorem:CphiResults}\ref{CphiCompact}, we have \[1 = \|C_\varphi\|_e = \left|\|C_\varphi\|_e - \|C_\psi\|_e\right| \leq \|C_\varphi-C_\psi\|_e \leq \|C_\varphi\|_e + \|C_\psi\|_e = \|C_\varphi\|_e = 1.\] Therefore, $\|C_\varphi-C_\psi\|_e = 1$.

Finally, suppose $\varphi\TIIIb\psi$.  Then $\varphi$ and $\psi$ have infinite range and $N_{\varphi,\psi}$ is infinite. From \eqref{Inequality:EssentialNormNorm} and Theorem \ref{Theorem:CompactDifference}, we obtain $\|C_\varphi-C_\psi\|_e \leq \|C_\varphi-C_\psi\| = 2.$  Assume, for purposes of contradiction, that $\|C_\varphi-C_\psi\|_e < 2$.  Then there exists a compact operator $K$ on $\Linf$ and constant $M > 0$ such that $\|(C_\varphi-C_\psi)-K\| < M < 2.$  

Since $N_{\varphi,\psi}$ is infinite, there exists an increasing sequence $(v_n)$ in $T$ with $\varphi(v_n)\neq\psi(v_n)$ for all $n \in \N$.  Define for each $n \in \N$ the function $f_n:T \to \C$ by $f_n(v) = \bigchi_{\varphi(v_n)}(v) - \bigchi_{\psi(v_n)}(v)$.  It follows from Lemma \ref{Lemma:ChiDifference} that $\|f_n\|_\infty = 1$.  As $K$ is compact and $(f_n)$ converges to 0 pointwise on $T$, Lemma \ref{Lemma:compactness_characterization} implies $\|Kf_n\|_\infty$ converges to 0 as $n \to \infty$. So there exists an $N \in \N$ such that $\|Kf_n\|_\infty < 1$ for all $n \geq N$.  Let $n \geq N$, and recall $\|(C_\varphi-C_\psi)f_n\|_{\infty} = 2$ from Lemma \ref{Lemma:ChiDifference}.  Then $\|(C_\varphi-C_\psi)f_n\|_\infty - \|Kf_n\|_\infty > 0$ and 
\[\begin{aligned} M &> \|(C_\varphi-C_\psi)-K\| \geq \|((C_\varphi-C_\psi)-K)f_n\|_\infty\\
&\geq \|(C_\varphi - C_\psi) f_n\|_\infty - \|Kf_n\|_\infty = 2-\|Kf_n\|_\infty.\end{aligned}\]
So $2 = \lim_{n \to \infty} \left(2-\|Kf_n\|_\infty\right) \leq M < 2,$ a contradiction.  It then follows that $\|C_\varphi-C_\psi\|_e = 2$.
\end{proof}

\section{Topological Structure of $\Comp$}\label{Section:Topology}

The operator norm $\|\cdot\|$, as the name suggests, is a norm on $\BB(\Linf)$.  This norm induces the metric $\metric_u:\BB(\Linf)\times\BB(\Linf)\to [0,\infty)$ defined by \[\metric_u(x,y) = \|x-y\|\] for all $x$ and $y$ in $\BB(\Linf)$.  The operator norm topology on $\BB(\Linf)$ is the metric space topology induced by metric $\metric_u$, and we denote $\BB(\Linf)$ with the operator norm topology by $\left(\BB(\Linf),\metric_u\right)$.  The set of bounded composition operators $\Comp$ is a subset of $\BB(\Linf)$.  So the operator norm topology on $\Comp$, denoted $\left(\Comp,\metric_u\right)$, is the subspace topology induced from the topology of $\left(\BB(\Linf),\metric_u\right)$.

It follows immediately from Theorem \ref{Theorem:CphiDifferenceNorm} that the operator norm topology on $\Comp$ is discrete, and thus $(\Comp,\metric_u)$ is totally disconnected. As a result of the total disconnectedness of the operator norm topology, $\Comp$ is not an example of a space that satisfies the Shapiro-Sundberg conjecture.

\begin{corollary}
Every point in $(\Comp,\metric_u)$ is isolated.
\end{corollary}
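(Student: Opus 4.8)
The plan is to read this off directly from Theorem~\ref{Theorem:CphiDifferenceNorm}, which supplies all the content: it asserts that whenever $\varphi \neq \psi$, the distance $\metric_u(C_\varphi, C_\psi) = \|C_\varphi - C_\psi\| = 2$. In other words, distinct composition operators are all the same fixed distance $2$ apart, so $(\Comp, \metric_u)$ is uniformly discrete. The only thing to verify is the elementary fact that a uniformly discrete metric space has no accumulation points.

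First I would fix an arbitrary $C_\varphi$ in $\Comp$ and consider the open ball $B(C_\varphi, 1) = \{A \in \Comp : \metric_u(C_\varphi, A) < 1\}$; any radius strictly less than $2$ would serve equally well. Next I would take an arbitrary $C_\psi$ lying in this ball, so that $\|C_\varphi - C_\psi\| < 1$. If $\varphi \neq \psi$, then Theorem~\ref{Theorem:CphiDifferenceNorm} forces $\|C_\varphi - C_\psi\| = 2$, which contradicts $\|C_\varphi - C_\psi\| < 1$. Hence $\varphi = \psi$, and therefore $C_\psi = C_\varphi$.

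Consequently the ball $B(C_\varphi, 1)$ meets $\Comp$ only in the point $C_\varphi$ itself, which is precisely the statement that $C_\varphi$ is isolated in $(\Comp, \metric_u)$. Since $C_\varphi$ was chosen arbitrarily, every point of $(\Comp, \metric_u)$ is isolated.

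There is essentially no obstacle to overcome: the entire difficulty was already dispatched in establishing Theorem~\ref{Theorem:CphiDifferenceNorm} (which in turn rests on Lemma~\ref{Lemma:ChiDifference} and the difference-of-characteristic-functions test function $f_w$). The corollary is then the purely formal observation that a metric space in which all pairwise distances between distinct points are bounded below by a positive constant is discrete.
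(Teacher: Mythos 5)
Your proposal is correct and follows exactly the paper's route: the paper likewise deduces the corollary immediately from Theorem~\ref{Theorem:CphiDifferenceNorm}, noting that since distinct composition operators are at distance $2$ apart, the operator norm topology on $\Comp$ is discrete. Your explicit ball argument merely spells out what the paper leaves as an immediate consequence.
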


Following the line of inquiry of \cite{MacCluerOhnoZhao:2001} and \cite{HosokawaIzuchiZheng:2002}, we will consider if the isolated composition operators in the operator norm topology are also essentially isolated, that is, isolated in the essential norm topology.  Since every composition operator is isolated in the operator norm topology, we will phrase this by determining if $\Comp$ is essentially totally disconnected.  If the answer is no, this will mark a significant difference in the respective topologies from that of $\CC(H^\infty)$ (see Section \ref{Section:Comparison}).

Unlike the operator norm, the essential norm $\|\cdot\|_e$ is a semi-norm on $\BB(\Linf)$, as any compact operator has essential norm zero.  In the same way the operator norm induces a metric on $\BB(\Linf)$, the essential norm induces a pseudo-metric $\metric_e:\BB(\Linf)\times\BB(\Linf) \to [0,\infty)$ defined by \[\metric_e(x,y) = \|x-y\|_e.\]  We define an open ball in the pseudo-metric as \[B_e(x,\varepsilon) = \{y \in \BB(\Linf) : \metric_e(x,y) = \|x-y\|_e < \varepsilon\}\] for every $x$ in $\BB(\Linf)$ and $\varepsilon > 0$.  The collection of all open balls forms a basis for the essential norm topology $(\BB(\Linf),\metric_e)$.  This topology is known as the pseudo-metric topology (see \cite{Kelley:1955}).  The essential norm topology on $\Comp$ is the induced topology from $(\BB(\Linf),\metric_e)$.  

We see that the essential norm topology is courser than the operator norm topology.  This follows immediately from \eqref{Inequality:EssentialNormNorm}; given $x$ in $\BB(\Linf)$ and $\varepsilon > 0$ we have \[B_u(x,\varepsilon) \subseteq B_e(x,\varepsilon).\]  

Given the results of Theorems \ref{Theorem:CphiResults}\ref{CphiCompact} and \ref{Theorem:EssentialNormDifference}, we can precisely determine the open balls $B_e$.  Recall the set of compact composition operators in $\Comp$ is denoted by $\CK$. From Theorem \ref{Theorem:CphiResults}\ref{CphiCompact} if $\varphi$ is a self-map of $T$ that has finite range, then $[C_\varphi]_{\TI} = \CK$.  We see from the fact that $\|\cdot\|_e$ is a simple function on $\Comp$, the open balls in the essential norm topology have the following formulation. 

\begin{theorem}\label{Theorem:EssentialBalls}
For elements in $\Comp$,  
\begin{alphaList}
\item if $\x \in \CK$, then \[B_e(\x,\varepsilon) = \begin{cases}
\CK & \text{if $0 < \varepsilon \leq 1$,}\\
\Comp & \text{if $\varepsilon > 1$.}
\end{cases}\]
\item if $\y \in \CK^c$, then \[B_e(\y,\varepsilon) = \begin{cases}
[\y]_{\TIIIa} & \text{if $0 < \varepsilon \leq 1$},\\
[\y]_{\TIIIa} \cup \CK & \text{if $1 < \varepsilon \leq 2$},\\
\Comp & \text{if $\varepsilon > 2$.}
\end{cases}\]
\end{alphaList}
\end{theorem}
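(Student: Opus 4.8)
The plan is to exploit the fact that, by Theorem \ref{Theorem:EssentialNormDifference}, the essential-norm distance $\metric_e(C_\varphi,C_\psi) = \|C_\varphi-C_\psi\|_e$ takes only the values $0$, $1$, and $2$ on $\Comp$, and that the value is dictated entirely by which of the four relations $\TI,\TII,\TIIIa,\TIIIb$ holds between the symbols. Since any two self-maps of $T$ are related by exactly one of these, computing a ball $B_e(\x,\varepsilon)$ reduces to partitioning $\Comp$ into the ``shells'' at essential distance $0$, $1$, and $2$ from $\x$, and then recording which shells fall inside the open radius $\varepsilon$. First I would fix the center and describe these shells explicitly; the only genuine inputs are Theorem \ref{Theorem:EssentialNormDifference} together with the identification $[C_\varphi]_{\TI} = \CK$ for $\varphi \in \Sfin(T)$ noted just before the statement.

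For part (a), write $\x = C_\varphi$ with $\varphi \in \Sfin(T)$. For any $C_\psi \in \Comp$, if $\psi \in \Sfin(T)$ then $\varphi \TI \psi$ and the distance is $0$, while if $\psi \in \Sinf(T)$ then exactly one symbol has finite range, so $\varphi \TII \psi$ and the distance is $1$. Hence the distance-$0$ shell is precisely $\CK$ and the distance-$1$ shell is $\CK^c$, with no distance-$2$ shell. Reading off the inequality $\metric_e < \varepsilon$: for $0 < \varepsilon \leq 1$ only the distance-$0$ shell qualifies, giving $B_e(\x,\varepsilon) = \CK$; for $\varepsilon > 1$ both shells qualify, giving $B_e(\x,\varepsilon) = \Comp$.

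For part (b), write the center as $\y = C_\psi$ with $\psi \in \Sinf(T)$. For $C_\chi \in \Comp$ there are three mutually exclusive cases: if $\chi \in \Sinf(T)$ with $N_{\psi,\chi}$ finite then $\psi \TIIIa \chi$ and the distance is $0$; if $\chi \in \Sfin(T)$ then $\psi \TII \chi$ and the distance is $1$; and if $\chi \in \Sinf(T)$ with $N_{\psi,\chi}$ infinite then $\psi \TIIIb \chi$ and the distance is $2$. Thus the distance-$0$ shell is $[\y]_{\TIIIa}$ (note $\psi \TIIIa \psi$, so the center indeed lies in it), the distance-$1$ shell is $\CK$, and the distance-$2$ shell is the remainder. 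The three regimes of the claimed formula then follow by collecting the shells with distance $< \varepsilon$: $[\y]_{\TIIIa}$ alone for $0 < \varepsilon \leq 1$, together with $\CK$ for $1 < \varepsilon \leq 2$, and all of $\Comp$ for $\varepsilon > 2$.

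There is no real obstacle here beyond bookkeeping: the substance lives entirely in Theorem \ref{Theorem:EssentialNormDifference}, and what remains is a finite case analysis over the values of $\varepsilon$. The one point that warrants a sentence of care is the strict-versus-weak inequalities at the endpoints $\varepsilon = 1$ and $\varepsilon = 2$, since the balls are open: at $\varepsilon = 1$ the $\CK$-shell is excluded and at $\varepsilon = 2$ the $\TIIIb$-shell is excluded, which is exactly what separates the regime $0 < \varepsilon \leq 1$ from $1 < \varepsilon \leq 2$, and the latter from $\varepsilon > 2$.
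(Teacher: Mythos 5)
Your proposal is correct and is essentially the argument the paper intends: the paper states this theorem without a written-out proof, deriving it directly from Theorem \ref{Theorem:EssentialNormDifference} (the essential norm of a difference takes only the values $0$, $1$, $2$ according to the relation between symbols) together with the identifications $\CK = [C_\varphi]_{\TI}$ and $\CK^c$ as the operators with infinite-range symbols, which is precisely your shell decomposition. Your explicit case analysis, including the care at the endpoints $\varepsilon = 1$ and $\varepsilon = 2$, fills in exactly the bookkeeping the paper leaves implicit.
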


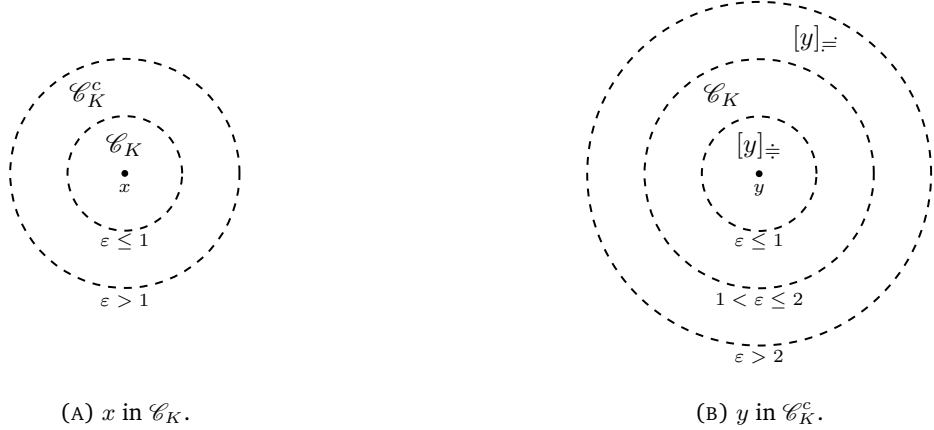
\begin{figure}[htp]
\ffigbox[\FBwidth]
{\begin{subfloatrow}
\ffigbox
{\caption{$\x$ in $\CK$.}}%
{\begin{tikzpicture}[scale=0.75pt]
\coordinate (center);
\draw[line width=0.75pt, dashed] (0,0) circle (.8in);
\draw[line width=0.75pt, dashed] (0,0) circle (.4in);
\filldraw[fill=black] (0,0) circle (.02in);
\node[below] at (0,0) {\tiny $\x$};
\node at (0,-1.2) {\tiny $\varepsilon \leq 1$};
\node at (0,-2.25) {\tiny $\varepsilon > 1$};
\node at (0,0.5) {\small $\CK$};
\node at (-0.65,1.4) {\small $\CK^c$};
\end{tikzpicture}}

\ffigbox
{\caption{$\y$ in $\CK^c$.}}%
{\begin{tikzpicture}[scale=0.75pt]
\coordinate (center);
\draw[line width=0.75pt, dashed] (0,0) circle (1.2in);
\draw[line width=0.75pt, dashed] (0,0) circle (.8in);
\draw[line width=0.75pt, dashed] (0,0) circle (.4in);
\filldraw[fill=black] (0,0) circle (.02in);
\node[below] at (0,0) {\tiny $\y$};
\node at (0,-1.2) {\tiny $\varepsilon \leq 1$};
\node at (0,-2.25) {\tiny $1 < \varepsilon \leq 2$};
\node at (0,-3.25) {\tiny $\varepsilon > 2$};
\node at (0,0.5) {\small $[\y]_{\TIIIa}$};
\node at (-0.65,1.4) {\small $\CK$};
\node at (1,2.4) {\small $[\y]_{\TIIIb}$};
\end{tikzpicture}}
\end{subfloatrow}}
{\caption{Open balls in the essential norm topology.}}
\end{figure}

The following two corollaries are a direct consequence of Theorem \ref{Theorem:EssentialBalls}.  Note for convenience, a neighborhood of a point in the essential norm topology will be referred to as an essential neighborhood.

\begin{corollary}\label{Corollary:EssentialNeighborhoods}
Let $\x$ and $\y$ be in $\Comp$ with $\x \in \CK$ and $\y \in \CK^c$.  Then every essential neighborhood of $\x$ contains $\CK$ and every essential neighborhood of $\y$ contains $[\y]_{\TIIIa}$.
\end{corollary}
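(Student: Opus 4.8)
The plan is to read both containments directly off the explicit description of the essential open balls given in Theorem~\ref{Theorem:EssentialBalls}, exploiting the fact that the balls $B_e(\cdot,\varepsilon)$ form a basis for the essential norm topology. Recall that a subset $U$ of $\Comp$ is an essential neighborhood of a point precisely when it contains some basic ball centered at that point. Hence, to prove that every essential neighborhood of a given point contains a fixed set $S$, it suffices to verify that $S$ is contained in \emph{every} ball centered at that point, as $\varepsilon$ ranges over all positive reals.

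For the first claim, I would fix $\x \in \CK$ and an arbitrary $\varepsilon > 0$. Part (a) of Theorem~\ref{Theorem:EssentialBalls} gives $B_e(\x,\varepsilon) = \CK$ when $0 < \varepsilon \leq 1$ and $B_e(\x,\varepsilon) = \Comp \supseteq \CK$ when $\varepsilon > 1$; in either regime $\CK \subseteq B_e(\x,\varepsilon)$. Since any essential neighborhood of $\x$ contains one of these balls, it contains $\CK$.

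For the second claim, I would fix $\y \in \CK^c$ and an arbitrary $\varepsilon > 0$, and argue identically using part (b) of Theorem~\ref{Theorem:EssentialBalls}. There $B_e(\y,\varepsilon)$ equals $[\y]_{\TIIIa}$, $[\y]_{\TIIIa}\cup\CK$, or $\Comp$ according as $0 < \varepsilon \leq 1$, $1 < \varepsilon \leq 2$, or $\varepsilon > 2$, and each of these three sets contains $[\y]_{\TIIIa}$. Thus $[\y]_{\TIIIa} \subseteq B_e(\y,\varepsilon)$ for every $\varepsilon > 0$, and the neighborhood-to-basis reduction completes the argument.

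I expect no genuine obstacle: all the content resides in Theorem~\ref{Theorem:EssentialBalls}, and the only step meriting attention is the reduction from an arbitrary neighborhood to a basic ball, which is immediate from the definition of the pseudo-metric topology in terms of its basis of open balls. The structural reason both statements hold is that the \emph{smallest} essential balls---those of radius at most $1$---already coincide with $\CK$ and $[\y]_{\TIIIa}$ respectively, so these sets sit inside every larger ball and therefore inside every essential neighborhood.
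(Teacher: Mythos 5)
Your proposal is correct and matches the paper's intent exactly: the paper presents this corollary as ``a direct consequence of Theorem~\ref{Theorem:EssentialBalls},'' and your argument simply spells out that every basic ball $B_e(\x,\varepsilon)$ contains $\CK$ and every $B_e(\y,\varepsilon)$ contains $[\y]_{\TIIIa}$, so every neighborhood does as well. Nothing further is needed.
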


\begin{corollary}
The topological space $(\Comp,\metric_e)$ is not Hausdorff.
\end{corollary}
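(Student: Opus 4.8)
The plan is to exploit the fact that $\|\cdot\|_e$ is only a pseudo-metric on $\Comp$: two distinct points lying at essential distance zero can never be separated by disjoint open sets, since each sits inside every ball centered at the other. Concretely, I would produce two \emph{distinct} composition operators whose difference is compact, and then invoke Corollary~\ref{Corollary:EssentialNeighborhoods} to conclude that all of their essential neighborhoods overlap.

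First I would exhibit the two points. By Lemma~\ref{Lemma:ClassesNonTrivial} there is a finite-range self-map $\varphi$ together with a distinct finite-range self-map $\psi$ satisfying $\varphi \TI \psi$. Since $\varphi \neq \psi$, Theorem~\ref{Theorem:CphiDifferenceNorm} gives $\|C_\varphi - C_\psi\| = 2 \neq 0$, so $C_\varphi$ and $C_\psi$ are genuinely distinct points of $\Comp$; and by Theorem~\ref{Theorem:CphiResults}\ref{CphiCompact} both lie in $\CK$. (One could equally well start from an infinite-range pair $\varphi \TIIIa \psi$ supplied by the same lemma.)

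Next I would show these two points cannot be separated. Let $U$ be any essential neighborhood of $C_\varphi$ and $V$ any essential neighborhood of $C_\psi$. Since both operators lie in $\CK$, Corollary~\ref{Corollary:EssentialNeighborhoods} guarantees $\CK \subseteq U$ and $\CK \subseteq V$. As $\CK$ contains $C_\varphi$ (indeed both chosen operators), it is nonempty, and therefore $\emptyset \neq \CK \subseteq U \cap V$. Hence no pair of disjoint essential neighborhoods of $C_\varphi$ and $C_\psi$ exists, so $(\Comp, \metric_e)$ fails the Hausdorff separation axiom.

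I do not anticipate a genuine obstacle, as the content is entirely packaged in the earlier results. The one point requiring care is the passage from distinct symbols to distinct operators: this is exactly where Theorem~\ref{Theorem:CphiDifferenceNorm} is needed, since the pseudo-metric argument would be vacuous if $C_\varphi$ and $C_\psi$ happened to coincide. Once distinctness is secured, Corollary~\ref{Corollary:EssentialNeighborhoods} does all the work.
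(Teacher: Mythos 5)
Your proposal is correct and is exactly the argument the paper intends: the corollary is stated as a direct consequence of Theorem~\ref{Theorem:EssentialBalls}/Corollary~\ref{Corollary:EssentialNeighborhoods}, namely that two distinct points of $\CK$ (which exist by Lemma~\ref{Lemma:ClassesNonTrivial}) have all their essential neighborhoods containing $\CK$, hence can never be separated. Your extra care in certifying $C_\varphi \neq C_\psi$ via Theorem~\ref{Theorem:CphiDifferenceNorm} is a sound (if slightly heavy) way to handle the one detail the paper leaves implicit.
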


As a consequence of Theorem \ref{Theorem:EssentialBalls}, a point $\x$ in $\CK$ will be essentially isolated if $\CK = \{\x\}$.  Likewise, $\y$ in $\CK^c$ will be essentially isolated if $[\y]_{\TIIIa} = \{\y\}$.  From Lemma \ref{Lemma:ClassesNonTrivial}, neither of these cases are possible.

\begin{corollary}
There are no isolated points in $(\Comp,\metric_e)$.
\end{corollary}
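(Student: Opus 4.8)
The plan is to reduce the statement to showing that certain sets are never singletons, and then to produce the required extra operator using Lemma \ref{Lemma:ClassesNonTrivial}. By Corollary \ref{Corollary:EssentialNeighborhoods}, every essential neighborhood of a point $\x \in \CK$ contains the whole of $\CK$, and every essential neighborhood of a point $\y \in \CK^c$ contains the whole $\TIIIa$-class $[\y]_{\TIIIa}$; moreover, Theorem \ref{Theorem:EssentialBalls} shows that each of these sets is itself an essential neighborhood of the point in question (taking $0 < \varepsilon \leq 1$). Thus $\CK$ is the minimal essential neighborhood of any of its points, and $[\y]_{\TIIIa}$ is the minimal essential neighborhood of $\y$. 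Consequently a point is essentially isolated if and only if its minimal essential neighborhood is a singleton, so it suffices to show that $\CK$ is never a singleton and that no $\TIIIa$-class $[\y]_{\TIIIa}$ with $\y \in \CK^c$ is a singleton.

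First I would handle a point $\x = C_\varphi$ in $\CK$. By Theorem \ref{Theorem:CphiResults}\ref{CphiCompact}, membership in $\CK$ forces $\varphi \in \Sfin(T)$, so Lemma \ref{Lemma:ClassesNonTrivial} supplies a self-map $f_1 \neq \varphi$ with $\varphi \TI f_1$; in particular $f_1 \in \Sfin(T)$, whence $C_{f_1} \in \CK$. For a point $\y = C_\psi$ in $\CK^c$, the same theorem gives $\psi \in \Sinf(T)$, and Lemma \ref{Lemma:ClassesNonTrivial} supplies a self-map $g_1 \neq \psi$ with $\psi \TIIIa g_1$, so that $C_{g_1} \in [\y]_{\TIIIa}$. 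In each case the newly produced operator lies in the relevant minimal neighborhood, so it remains only to confirm that it is genuinely a second point.

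The one point requiring care---though it is minor---is verifying that distinct symbols yield distinct operators, so that $C_{f_1} \neq C_\varphi$ and $C_{g_1} \neq C_\psi$ as operators and not merely as symbols. This follows at once from Theorem \ref{Theorem:CphiDifferenceNorm}: since $f_1 \neq \varphi$ we have $\|C_{f_1} - C_\varphi\| = 2 \neq 0$, and likewise $\|C_{g_1} - C_\psi\| = 2 \neq 0$. With this, both $\CK$ and every $\TIIIa$-class $[\y]_{\TIIIa}$ contain at least two operators, so by the reduction of the first paragraph no point of $(\Comp,\metric_e)$ admits a singleton essential neighborhood, and the corollary follows.
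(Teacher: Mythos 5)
Your proposal is correct and follows essentially the same route as the paper: reduce isolation to whether $\CK$ or $[\y]_{\TIIIa}$ is a singleton via Theorem \ref{Theorem:EssentialBalls}, then invoke Lemma \ref{Lemma:ClassesNonTrivial} to rule that out. Your extra step of checking that distinct symbols induce distinct operators (via $\|C_{f_1}-C_\varphi\|=2$ from Theorem \ref{Theorem:CphiDifferenceNorm}) is a point the paper leaves implicit, and it is a worthwhile bit of rigor rather than a different argument.
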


From Theorem \ref{Theorem:EssentialBalls} the set of compact composition operators $\CK$ is open in the essential norm topology, as it corresponds to the open ball $B_e(\x,1/2)$ for any $\x$ in $\CK$.  If $\y$ is in $\CK^c$, then $[\y]_{\TIIIa} = B_e(y,1/2)$ is open as well.  The sets $\CK$ and $[\y]_{\TIIIa}$, for a $\y$ in $\CK^c$, are not only open in the essential norm topology, but are closed as well (see Lemma \ref{Lemma:EssentiallyClosed}).  A topological approach is quite straightforward, whereas an analytic proof would require showing that every convergent sequence of compact composition operators first converges to a composition operator in essential-norm, and then prove that operator must also be compact.  

\begin{lemma}\label{Lemma:EssentiallyClosed}
Let $\y$ be in $\CK^c$.  Then the sets $\CK$ and $[\y]_{\TIIIa}$ are disjoint and closed in the essential norm topology.  
\end{lemma}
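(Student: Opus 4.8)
The plan is to read everything off the explicit description of the essential open balls in Theorem \ref{Theorem:EssentialBalls}, using in addition the fact, recorded in Section \ref{Section:Preliminary}, that $\TIIIa$ is an equivalence relation on $\Sinf(T)$. Throughout I would write $\y = C_\varphi$ for a self-map $\varphi$; since $\y \in \CK^c$, Theorem \ref{Theorem:CphiResults}\ref{CphiCompact} guarantees $\varphi \in \Sinf(T)$.

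First I would dispatch disjointness. Every element of $[\y]_{\TIIIa}$ is a composition operator $C_\psi$ with $\varphi \TIIIa \psi$, and the definition of $\TIIIa$ forces $\psi \in \Sinf(T)$. By Theorem \ref{Theorem:CphiResults}\ref{CphiCompact} such a $C_\psi$ is not compact, so $[\y]_{\TIIIa} \subseteq \CK^c$; hence $[\y]_{\TIIIa} \cap \CK = \emptyset$.

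To prove both sets are closed, I would show that each complement is open by exhibiting, for every point, an essential ball of radius $1/2$ contained in the complement. For $\CK$: given any $\z \in \CK^c$, Theorem \ref{Theorem:EssentialBalls}(b) gives $B_e(\z,1/2) = [\z]_{\TIIIa}$, and the disjointness argument just given shows $[\z]_{\TIIIa} \subseteq \CK^c$; thus $\CK^c$ is open and $\CK$ is closed. For $[\y]_{\TIIIa}$: fix $\z \notin [\y]_{\TIIIa}$ and split on whether $\z \in \CK$. If $\z \in \CK$, then $B_e(\z,1/2) = \CK$ by Theorem \ref{Theorem:EssentialBalls}(a), which misses $[\y]_{\TIIIa}$ by disjointness. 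If instead $\z \in \CK^c \setminus [\y]_{\TIIIa}$, then $B_e(\z,1/2) = [\z]_{\TIIIa}$; since $\TIIIa$ is an equivalence relation, its classes partition $\Sinf(T)$, and as $\z \notin [\y]_{\TIIIa}$ the classes $[\z]_{\TIIIa}$ and $[\y]_{\TIIIa}$ are distinct, hence disjoint. In either case $\z$ has an essential neighborhood missing $[\y]_{\TIIIa}$, so its complement is open.

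There is no serious obstacle here once Theorem \ref{Theorem:EssentialBalls} is in hand; the only point requiring care is the case split for the complement of $[\y]_{\TIIIa}$, where recognizing that $\TIIIa$ partitions $\Sinf(T)$ is exactly what converts ``not $\TIIIa$-related to $\varphi$'' into genuine set-theoretic disjointness of the two equivalence classes.
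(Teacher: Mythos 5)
Your proposal is correct and follows essentially the same route as the paper: disjointness from the definitions of the relations, then closedness by exhibiting the radius-$1/2$ essential balls of Theorem \ref{Theorem:EssentialBalls} inside each complement. The only difference is cosmetic: you write out the case split for the complement of $[\y]_{\TIIIa}$ (including the partition property of $\TIIIa$-classes), which the paper compresses into ``follows a similar argument.''
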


\begin{proof}
First, we show $\CK$ and $[\y]_{\TIIIa}$ are disjoint.  Assume there exists $\z$ in $\CK\cap[\y]_{\TIIIa}$.  Then $\z \TIIIa \y$ and $\z \TI x$ for some $x$ in $\CK$.  This is a contradiction since $\TI$ and $\TIIIa$ are distinct equivalence relations.  Thus $\CK \cap [\y]_{\TIIIa} = \varnothing$.

We conclude the proof by showing $\CK$ is closed in the essential norm topology, as the proof for $[\y]_{\TIIIa}$ follows a similar argument.  It follows from Theorem \ref{Theorem:EssentialBalls} and the above argument that $B_e(\y,1/2) = [\y]_{\TIIIa}$ is disjoint from $\CK$. Thus $\CK^c$ is open since $B_e(\y,1/2) \subseteq \CK^c$ for each $\y$ in $\CK^c$. Therefore, $\CK$ is closed in the essential norm topology.
\end{proof}

In fact, not only are $\CK$ and $[\y]_{\TIIIa}$, for any $\y$ in $\CK^c$, closed in the essential norm topology, they are compact.  Note, $(\Comp,\metric_e)$ is not a space for which compact implies closed and so we proved the sets are closed first in Lemma \ref{Lemma:EssentiallyClosed}.  

\begin{lemma}\label{Lemma:EssentiallyCompact}
Let $\y$ be in $\CK^c$.  Then the sets $\CK$ and $[\y]_{\TIIIa}$ are compact in the essential norm topology.
\end{lemma}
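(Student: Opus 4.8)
The plan is to exploit the explicit description of the essential open balls from Theorem \ref{Theorem:EssentialBalls} to show that each of $\CK$ and $[\y]_{\TIIIa}$ inherits the \emph{indiscrete} subspace topology from $(\Comp,\metric_e)$, after which compactness is automatic, since any space carrying the indiscrete topology is trivially compact. The underlying reason such a coarse structure appears is that, by Theorem \ref{Theorem:EssentialNormDifference}, the essential pseudo-metric takes only the values $0$, $1$, and $2$; in particular every pair of points lying in $\CK$, or lying in a common $\TIIIa$-class, is at essential distance $0$, so these sets behave as single topological ``points.''

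Concretely, I would first record that a basic essential ball meets $\CK$ in either $\varnothing$ or all of $\CK$. Reading this off Theorem \ref{Theorem:EssentialBalls}: if the center lies in $\CK$, then the ball contains $\CK$ for every $\varepsilon>0$; if the center lies in $\CK^c$, then the ball equals $[\cdot]_{\TIIIa}$ for $\varepsilon\le 1$ (hence is disjoint from $\CK$) and contains $\CK$ once $\varepsilon>1$. Because an arbitrary essential-open set $U$ is a union of such balls, it follows that $U\cap\CK\in\{\varnothing,\CK\}$. Consequently, given any essential-open cover of $\CK$, a single member meeting $\CK$ already contains all of $\CK$, yielding a one-element subcover; hence $\CK$ is compact.

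The argument for $[\y]_{\TIIIa}$ has the same shape, and I would again verify via Theorem \ref{Theorem:EssentialBalls} that every basic ball meets $[\y]_{\TIIIa}$ in either $\varnothing$ or the whole class. A ball centered at a point $\z$ of the same $\TIIIa$-class contains $[\z]_{\TIIIa}=[\y]_{\TIIIa}$ as soon as $\varepsilon>0$; a ball centered in $\CK$ is disjoint from $[\y]_{\TIIIa}$ until $\varepsilon>1$, and a ball centered in a different $\TIIIa$-class is disjoint until $\varepsilon>2$, at which thresholds the ball becomes all of $\Comp$ and therefore contains $[\y]_{\TIIIa}$. As before this forces $U\cap[\y]_{\TIIIa}\in\{\varnothing,[\y]_{\TIIIa}\}$ for every open $U$, and the one-element subcover argument concludes the proof.

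The only subtlety worth flagging is that $(\Comp,\metric_e)$ is merely a pseudo-metric space and is not Hausdorff (points at essential distance $0$ are topologically indistinguishable), so the familiar shortcuts---``a closed subset of a compact space is compact'' or ``compact implies closed''---are unavailable and must be avoided; this is precisely why compactness has to be extracted by hand from an open cover rather than inferred from Lemma \ref{Lemma:EssentiallyClosed}. Past that point the proof is bookkeeping with the cases of Theorem \ref{Theorem:EssentialBalls}, and I anticipate no genuine obstacle.
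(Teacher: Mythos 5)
Your proof is correct and takes essentially the same approach as the paper: the paper's proof also extracts a singleton subcover, using Corollary \ref{Corollary:EssentialNeighborhoods} to conclude that any open set meeting $\CK$ (resp.\ $[\y]_{\TIIIa}$) must contain the whole set. Your ball-by-ball case analysis simply re-derives the content of that corollary directly from Theorem \ref{Theorem:EssentialBalls}, so the two arguments are the same in substance.
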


\begin{proof}
As with Lemma \ref{Lemma:EssentiallyClosed}, we will prove $\CK$ is compact; the compactness of $[\y]_{\TIIIa}$ follows a similar argument.  Let $\mathscr{O}$ be an open cover of $\CK$ in the essential norm topology.  There exists an open set $\U$ in $\mathscr{O}$ with $\U\cap\CK \neq \varnothing$.  It follows from Corollary \ref{Corollary:EssentialNeighborhoods} that $\CK \subseteq \U$.  Thus $\mathscr{O}$ admits a singleton sub-cover of $\CK$, proving $\CK$ is compact in the essential norm topology. 
\end{proof}

A topological space $X$ is said to be \textit{locally compact} if every point in $X$ has a compact neighborhood.  Also, X is \textit{strongly locally compact}, or \textit{locally relatively compact}, if every point in $X$ has a closed compact neighborhood.  In a Hausdorff space, these properties are equivalent, but in a non-Hausdorff space, such as $(\Comp,\metric_e)$, strongly locally compact implies locally compact.  Lemma \ref{Lemma:EssentiallyClosed} and \ref{Lemma:EssentiallyCompact} show $(\Comp,\metric_e)$ is strongly locally compact.

\begin{corollary}
The space $(\Comp,\metric_e)$ is strongly locally compact.
\end{corollary}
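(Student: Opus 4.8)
The plan is to unwind the definition of strong local compactness and reduce it directly to the two preceding lemmas. By definition it suffices to exhibit, for each point of $\Comp$, an essential neighborhood that is simultaneously closed and compact. Since $\CK$ and its complement $\CK^c$ partition $\Comp$, I would argue by cases according to which of the two pieces contains the given point.

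If the point $\x$ lies in $\CK$, then Theorem \ref{Theorem:EssentialBalls} gives $B_e(\x,1/2) = \CK$, so $\CK$ is an open set containing $\x$ and hence an essential neighborhood of $\x$; Lemma \ref{Lemma:EssentiallyClosed} shows it is closed and Lemma \ref{Lemma:EssentiallyCompact} shows it is compact. If instead $\y$ lies in $\CK^c$, then Theorem \ref{Theorem:EssentialBalls} gives $B_e(\y,1/2) = [\y]_{\TIIIa}$, an open neighborhood of $\y$, and the same two lemmas show it is closed and compact. In either case the chosen point has a closed compact neighborhood, which is exactly what strong local compactness requires.

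There is no real obstacle remaining: all of the genuine content has already been absorbed into Lemmas \ref{Lemma:EssentiallyClosed} and \ref{Lemma:EssentiallyCompact}, which in turn rest on the explicit description of the essential-norm balls in Theorem \ref{Theorem:EssentialBalls}, so the corollary is a short case-check confirming that the two radius-$1/2$ balls witness the property everywhere. The only point demanding care is the non-Hausdorff setting: one must verify the stronger condition (a \emph{closed} compact neighborhood) rather than mere local compactness, since here compactness does not entail closedness — precisely the reason Lemma \ref{Lemma:EssentiallyClosed} was established separately from Lemma \ref{Lemma:EssentiallyCompact}.
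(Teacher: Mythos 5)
Your proposal is correct and matches the paper's own argument: the paper likewise deduces the corollary directly from Lemma \ref{Lemma:EssentiallyClosed} and Lemma \ref{Lemma:EssentiallyCompact}, with the radius-$1/2$ balls from Theorem \ref{Theorem:EssentialBalls} ($\CK$ for points of $\CK$, $[\y]_{\TIIIa}$ for points of $\CK^c$) serving as the closed compact neighborhoods. Your added remark on why closedness must be checked separately in this non-Hausdorff setting is exactly the point the paper makes before stating the corollary.
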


\noindent Since $\TIIIa$ is an equivalence relation on $\CK^c$, the collection $\{\CK\} \cup \{[\y]_{\TIIIa} : \y \in \CK^c\}$ is an open cover of $\Comp$ which does not admit a finite sub-cover.  

\begin{corollary}
The space $(\Comp,\metric_e)$ is not compact.
\end{corollary}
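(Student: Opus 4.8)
The plan is to verify that the collection $\mathscr{O} = \{\CK\} \cup \{[\y]_{\TIIIa} : \y \in \CK^c\}$ highlighted above is an open cover of $\Comp$ admitting no finite subcover, where the crux will be showing that $\mathscr{O}$ is actually infinite. First I would record that $\mathscr{O}$ is a cover by open sets: every $C_\varphi \in \Comp$ has symbol $\varphi$ of either finite or infinite range, placing $C_\varphi$ in $\CK$ or in its own class $[C_\varphi]_{\TIIIa} \subseteq \CK^c$ respectively; and each member of $\mathscr{O}$ is open by Theorem \ref{Theorem:EssentialBalls}, since $\CK = B_e(\x,1/2)$ for any $\x \in \CK$ and $[\y]_{\TIIIa} = B_e(\y,1/2)$ for each $\y \in \CK^c$.

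The key structural observation is that $\mathscr{O}$ partitions $\Comp$ into nonempty pairwise disjoint sets. Indeed, $\CK$ is disjoint from each $[\y]_{\TIIIa}$ by Lemma \ref{Lemma:EssentiallyClosed}, while two classes $[\y]_{\TIIIa}$ and $[\y']_{\TIIIa}$ are disjoint whenever distinct, because $\TIIIa$ is an equivalence relation on $\CK^c$. For a cover by pairwise disjoint nonempty sets, each point lies in exactly one member, so that member cannot be omitted from any subcover; hence the only subcover of $\mathscr{O}$ is $\mathscr{O}$ itself, and a finite subcover exists if and only if $\mathscr{O}$ is finite. It therefore suffices to produce infinitely many distinct $\TIIIa$-classes.

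This last step is the main obstacle, and I would resolve it by an explicit construction in the spirit of the map $g_2$ from Lemma \ref{Lemma:ClassesNonTrivial}. Using unboundedness of $T$, fix a strictly increasing sequence $(w_n)_{n \in \N}$, whose terms are then pairwise distinct, and for each $k \in \N$ define the self-map
\[
\varphi_k(v) = \begin{cases} w_{n+k} & \text{if } v = w_n \text{ for some } n \in \N,\\ w_1 & \text{otherwise.}\end{cases}
\]
Each $\varphi_k$ has infinite range, since its image contains the infinite set $\{w_{1+k}, w_{2+k}, \dots\}$, so $C_{\varphi_k} \in \CK^c$. For $j \neq k$ and every $n$ we have $\varphi_j(w_n) = w_{n+j} \neq w_{n+k} = \varphi_k(w_n)$, so $N_{\varphi_j,\varphi_k}$ is infinite and $\varphi_j \TIIIb \varphi_k$; consequently $[C_{\varphi_j}]_{\TIIIa}$ and $[C_{\varphi_k}]_{\TIIIa}$ are distinct members of $\mathscr{O}$. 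Thus $\mathscr{O}$ contains the infinitely many distinct classes $\{[C_{\varphi_k}]_{\TIIIa} : k \in \N\}$, so by the previous paragraph it admits no finite subcover and $(\Comp,\metric_e)$ is not compact.
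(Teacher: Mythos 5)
Your proof is correct and follows the same route as the paper: the paper dispatches this corollary in a single sentence by observing that $\{\CK\} \cup \{[\y]_{\TIIIa} : \y \in \CK^c\}$ is an open cover of $\Comp$, pairwise disjoint by the equivalence-relation structure, admitting no finite subcover. Your explicit construction of the maps $\varphi_k$ supplies the one detail the paper leaves implicit --- that there are in fact infinitely many distinct $\TIIIa$-classes, without which the no-finite-subcover claim would not follow --- so your write-up is, if anything, more complete than the original.
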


We now consider the essential connectedness and essential components of $\Comp$.  We wish to determine the essentially isolated points in $\Comp$, and to determine if the isolated points and essentially isolated points in $\Comp$ coincide, as they do in $\CC(H^\infty)$.  As $(\Comp,\metric_e)$ contains sets other than $\varnothing$ and $\Comp$ that are both open and closed, it is immediate that $\Comp$ is not connected in the essential norm topology.

\begin{corollary}
The space $(\Comp,\metric_e)$ is not connected.
\end{corollary}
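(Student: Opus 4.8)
The plan is to disconnect $(\Comp,\metric_e)$ by producing a single proper, nonempty subset that is simultaneously open and closed, since the existence of such a \emph{clopen} set is equivalent to the failure of connectedness. The obvious candidate is $\CK$, the set of compact composition operators, whose topological behavior has already been completely determined by the preceding results; indeed, the remark immediately preceding this corollary flags precisely this strategy.

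First I would note that $\CK$ is open: by Theorem \ref{Theorem:EssentialBalls}, for any $\x$ in $\CK$ the essential open ball $B_e(\x,1/2)$ coincides exactly with $\CK$, so $\CK$ is itself a basic open set. Second, I would invoke Lemma \ref{Lemma:EssentiallyClosed}, which establishes that $\CK$ is closed in the essential norm topology. Together these show $\CK$ is clopen, and hence its complement $\CK^c$ is clopen as well, giving a candidate separation $\Comp = \CK \cup \CK^c$.

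It then remains only to verify that this separation is nontrivial, i.e. that both pieces are nonempty. Any constant self-map of $T$ has bounded (hence, by local finiteness, finite) range and so lies in $\Sfin(T)$; by Theorem \ref{Theorem:CphiResults}\ref{CphiCompact} it induces a compact composition operator, whence $\CK \neq \varnothing$. Conversely, since $T$ is unbounded, the identity self-map has unbounded range and so lies in $\Sinf(T)$, and the same theorem places its composition operator in $\CK^c$, whence $\CK \neq \Comp$. Thus $\{\CK,\CK^c\}$ is a partition of $\Comp$ into two nonempty disjoint open sets, and $(\Comp,\metric_e)$ is not connected. The argument has essentially no obstacle: all of the work---recognizing $\CK$ as an essential open ball and proving its closedness---was done in Theorem \ref{Theorem:EssentialBalls} and Lemma \ref{Lemma:EssentiallyClosed}, and the only point deserving a moment's care is confirming both sides of the separation are nonempty, which the standing hypotheses that $T$ is unbounded and locally finite guarantee.
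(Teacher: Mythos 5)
Your proposal is correct and follows essentially the same route as the paper, which deduces non-connectedness from the remark that $\Comp$ contains proper nonempty clopen sets, namely $\CK$ (open by Theorem \ref{Theorem:EssentialBalls}, closed by Lemma \ref{Lemma:EssentiallyClosed}). Your explicit verification that both $\CK$ and $\CK^c$ are nonempty (via constant maps and the identity map) is a small but welcome addition that the paper leaves implicit.
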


If a topological space $X$ is not connected, then it can be written as a union of connected sets so that for any two such sets, $A$ and $B$, $\overline{A}\cap\overline{B}=\varnothing$.  We say two such sets are \textit{separated}.  These connected sets are called the connected components of $X$. 

\begin{theorem}\label{Theorem:ConnectedBalls}
The connected components of $(\Comp,\metric_e)$ are $\{\CK\} \cup \{[\y]_{\TIIIa} : \y \in \CK^c\}$.
\end{theorem}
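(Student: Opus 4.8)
The plan is to show that each set in the collection $\{\CK\} \cup \{[\y]_{\TIIIa} : \y \in \CK^c\}$ is connected, and then to exploit the fact that these sets are clopen (Lemma \ref{Lemma:EssentiallyClosed}) and partition $\Comp$ in order to conclude that they are precisely the connected components.

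The key observation is that the essential pseudo-metric $\metric_e$ vanishes identically on each candidate piece. Indeed, if $C_\varphi$ and $C_\psi$ both lie in $\CK$ then $\varphi \TI \psi$, and if both lie in a fixed class $[\y]_{\TIIIa}$ then $\varphi \TIIIa \psi$; in either case Theorem \ref{Theorem:EssentialNormDifference} gives $\metric_e(C_\varphi, C_\psi) = \|C_\varphi - C_\psi\|_e = 0$. I would use this to show that each piece $A$ (either $\CK$ or some $[\y]_{\TIIIa}$) is connected: if $A = U \sqcup V$ were a separation into nonempty relatively open sets, pick $C_\varphi \in U$ and $C_\psi \in V$; relative openness of $U$ furnishes $\varepsilon > 0$ with $B_e(C_\varphi,\varepsilon) \cap A \subseteq U$, but $\metric_e(C_\varphi,C_\psi) = 0 < \varepsilon$ forces $C_\psi \in B_e(C_\varphi,\varepsilon) \cap A \subseteq U$, contradicting $C_\psi \in V$. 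Equivalently, one can read off directly from Theorem \ref{Theorem:EssentialBalls} that the subspace topology on each piece is indiscrete; either formulation shows every piece is connected.

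It then remains to pass from ``connected clopen partition'' to ``connected components.'' Here I would invoke the general topological fact that if a space is partitioned into clopen connected subsets, those subsets are exactly its connected components. Concretely, any connected subset of $\Comp$ must lie in a single piece, since otherwise intersecting it with the clopen partition would separate it; in particular each connected component is contained in one piece. Conversely each piece is connected, hence contained in some component, and by disjointness of the partition these two containments force equality. This identifies the components with exactly the listed sets.

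I do not expect a genuine obstacle here: the essential-norm computation of Theorem \ref{Theorem:EssentialNormDifference} does the real work by collapsing each piece to a topologically trivial (indiscrete) subspace, while the clopen-ness established in Lemma \ref{Lemma:EssentiallyClosed} is exactly what prevents distinct pieces from merging into a larger connected set. The only point requiring care is the bookkeeping that $\{\CK\} \cup \{[\y]_{\TIIIa} : \y \in \CK^c\}$ genuinely partitions $\Comp$, which follows since $\TIIIa$ is an equivalence relation on $\CK^c$ and $\CK = [C_\varphi]_{\TI}$ for any finite-range $\varphi$.
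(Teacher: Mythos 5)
Your proposal is correct and follows essentially the same route as the paper: the paper likewise proves each piece $\CK$ and $[\y]_{\TIIIa}$ is connected by noting that any open set containing one of its points must contain the whole piece (Corollary \ref{Corollary:EssentialNeighborhoods}, which encodes the same fact as your observation that $\metric_e$ vanishes on each piece), and then uses Lemma \ref{Lemma:EssentiallyClosed} together with the fact that these sets partition $\Comp$ to conclude they are exactly the components. Your phrasing via the vanishing pseudo-metric and the clopen-partition lemma is just a mild repackaging of the paper's argument, not a different proof.
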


\begin{proof}
We begin by showing $\CK$ is a connected subset of $(\Comp,\metric_e)$.   Let $\mathcal{O}_1$ and $\mathcal{O}_2$ be disjoint open subsets of $\CK$ whose union is $\CK$.  Let $\x$ be in $\CK$, and without loss of generality suppose $\x$ is in $\mathcal{O}_1$. There exists $\varepsilon > 0$ such that $B_e(\x,\varepsilon) \subseteq \mathcal{O}_1$.  As $\x\in\CK$, $\CK \subseteq B_e(\x,\varepsilon)$ by Corollary \ref{Corollary:EssentialNeighborhoods}.  This implies $\CK = \mathcal{O}_1$, and so $\mathcal{O}_2 = \varnothing$.  Thus $\CK$ is connected in the essential norm topology.  The proof that $[\y]_{\TIIIa}$, for $\y\in\CK^c$, is connected follows a similar argument.

Let $\y$ in $\CK^c$.  From Lemma \ref{Lemma:EssentiallyClosed}, $\CK$ and $[\y]_{\TIIIa}$ are separated. By Lemma \ref{Lemma:ClassesNonTrivial}, there exists $\z$ in $\CK^c - [y]_{\TIIIa}$. A similar argument as above shows that $[\z]_{\TIIIa}$ and $[\y]_{\TIIIa}$ are separated.  With the observation that \[\Comp = \{\CK\} \cup \bigcup_{\y \in \CK^c} [\y]_{\TIIIa},\] the essential connected components of $\Comp$ are precisely $\CK$ and $[\y]_{\TIIIa}$, for $\y$ in $\CK^c$.
\end{proof}

A topological space $X$ is called \textit{locally connected} if every point of $X$ has a connected neighborhood. Corollary \ref{Corollary:EssentialNeighborhoods} and Theorem \ref{Theorem:ConnectedBalls} show $(\Comp,\metric_e)$ is locally connected.

\begin{corollary}
The space $(\Comp,\metric_e)$ is locally connected.
\end{corollary}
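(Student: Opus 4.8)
The plan is to verify the paper's definition directly: exhibit, for each point of $(\Comp,\metric_e)$, a single open connected neighborhood. Since the open balls $B_e(\cdot,\varepsilon)$ have been computed explicitly in Theorem \ref{Theorem:EssentialBalls} and the connected components identified in Theorem \ref{Theorem:ConnectedBalls}, the argument reduces to pairing these two results and splitting into the two cases $\x \in \CK$ and $\y \in \CK^c$.

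First I would let $A$ be an arbitrary element of $\Comp$. In the case $A \in \CK$, I apply Theorem \ref{Theorem:EssentialBalls}\ref{CphiBounded}-style part (a) with any $\varepsilon \le 1$, say $\varepsilon = 1/2$, to obtain $B_e(A,1/2) = \CK$. As an open ball, this is an open neighborhood of $A$, and Theorem \ref{Theorem:ConnectedBalls} identifies $\CK$ as a connected component, hence connected. Thus $\CK$ is a connected neighborhood of $A$. In the complementary case $A = C_\psi \in \CK^c$ (so $\psi$ has infinite range), part (b) of Theorem \ref{Theorem:EssentialBalls} with $\varepsilon = 1/2$ gives $B_e(A,1/2) = [A]_{\TIIIa}$, again an open neighborhood of $A$, and Theorem \ref{Theorem:ConnectedBalls} shows $[A]_{\TIIIa}$ is connected. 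So $[A]_{\TIIIa}$ is a connected neighborhood of $A$. Since every point of $\Comp$ admits a connected neighborhood, the space is locally connected.

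Equivalently, and perhaps more transparently, one can lean on Corollary \ref{Corollary:EssentialNeighborhoods}: every essential neighborhood of a point of $\CK$ contains $\CK$, and every essential neighborhood of $\y \in \CK^c$ contains $[\y]_{\TIIIa}$. This says precisely that $\CK$ and the classes $[\y]_{\TIIIa}$ are the \emph{minimal} neighborhoods at their points; combined with their being open (as $\varepsilon = 1/2$ balls) and connected (Theorem \ref{Theorem:ConnectedBalls}), local connectedness is immediate.

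There is no substantive obstacle here, as the heavy lifting was done in Theorems \ref{Theorem:EssentialBalls} and \ref{Theorem:ConnectedBalls}. The only point requiring care is the non-Hausdorff, pseudo-metric setting: I should confirm that an open ball $B_e(A,1/2)$ genuinely qualifies as an open neighborhood in the pseudo-metric topology, which holds since such balls form a basis by definition. I would also remark that, because the paper adopts the weak formulation of local connectedness (a single connected neighborhood per point rather than a connected neighborhood basis), no further refinement is needed; but in fact the computation above furnishes a neighborhood basis too, since for $0 < \varepsilon \le 1$ the balls are constant in $\varepsilon$, so the stronger form also holds.
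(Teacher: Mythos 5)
Your proof is correct and follows the paper's own argument: the paper likewise deduces local connectedness by combining Corollary \ref{Corollary:EssentialNeighborhoods} (the minimal open neighborhoods $\CK$ and $[\y]_{\TIIIa}$) with Theorem \ref{Theorem:ConnectedBalls} (their connectedness). Your closing observation that these minimal neighborhoods automatically give a connected neighborhood basis is a nice bonus, but nothing beyond the paper's route.
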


A topological space $X$ is \textit{locally path-connected} if every point of $X$ has a path-connected neighborhood.

\begin{theorem}
The space $(\Comp,\metric_e)$ is locally path-connected.
\end{theorem}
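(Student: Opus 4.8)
The plan is to exhibit, for every point of $(\Comp,\metric_e)$, an explicit path-connected neighborhood, and the natural candidates are precisely the connected components identified in Theorem \ref{Theorem:ConnectedBalls}. For a point $\x$ in $\CK$ I would take the neighborhood $\CK = B_e(\x,1/2)$, and for a point $\y$ in $\CK^c$ I would take $[\y]_{\TIIIa} = B_e(\y,1/2)$; both are essential neighborhoods of the respective points by Theorem \ref{Theorem:EssentialBalls}. It therefore suffices to prove that each of these two sets is path-connected in the subspace topology.

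The key observation is that each of these neighborhoods carries the indiscrete subspace topology. To see this, let $A$ denote either $\CK$ or $[\y]_{\TIIIa}$, and let $V$ be a nonempty relatively open subset of $A$, say $V = \U \cap A$ with $\U$ essentially open. Choosing any point of $V$, Corollary \ref{Corollary:EssentialNeighborhoods} forces $\U$ to contain all of $A$, so that $V = A$; hence $\varnothing$ and $A$ are the only relatively open subsets of $A$. Equivalently, this reflects the fact that $\metric_e$ vanishes identically on $A$: any two elements of $\CK$ arise from finite-range self-maps, and any two elements of $[\y]_{\TIIIa}$ arise from $\TIIIa$-related (or equal) self-maps, so in either case their difference has essential norm $0$ by Theorem \ref{Theorem:EssentialNormDifference}.

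Finally, I would invoke the elementary fact that an indiscrete space is path-connected: if $A$ has the indiscrete topology, then every map $\gamma:[0,1]\to A$ is continuous, since the preimage of each open set ($\varnothing$ or $A$) is open in $[0,1]$. Thus, given any two points $\x,\y$ in $A$, the map sending $0$ to $\x$ and every $t$ in $(0,1]$ to $\y$ is a path from $\x$ to $\y$ lying in $A$, so $A$ is path-connected. Applying this to $A = \CK$ and to $A = [\y]_{\TIIIa}$ for each $\y$ in $\CK^c$ produces a path-connected neighborhood of every point of $\Comp$, which establishes local path-connectedness.

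There is no genuine computational obstacle here; the difficulty is entirely conceptual. The one point requiring care, and indeed the whole content of the argument, is recognizing that the failure of the Hausdorff property observed earlier is, on each component, as extreme as possible: these components are not merely connected but carry the indiscrete topology, so that path-connectedness is automatic rather than demanding an honest construction of a continuous curve through operator space.
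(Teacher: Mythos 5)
Your proposal is correct, and while it rests on the same foundation as the paper's proof---the same candidate neighborhoods ($\CK$ for points of $\CK$, and $[\y]_{\TIIIa}$ for points of $\CK^c$, both open by Theorem \ref{Theorem:EssentialBalls}) and the same key input, Corollary \ref{Corollary:EssentialNeighborhoods}---the logical packaging is genuinely different. The paper exhibits a concrete jump path, constant on $[0,1)$ with a jump at $t=1$, and verifies its continuity by hand, checking preimages of open sets in the two cases $\x_1\in\U$ and $\x_1\notin\U$. You instead prove the stronger intermediate fact that each of $\CK$ and $[\y]_{\TIIIa}$ carries the indiscrete subspace topology (equivalently, that $\metric_e$ vanishes identically on each set, which Theorem \ref{Theorem:EssentialNormDifference} confirms for the relevant pairs of symbols), after which continuity of \emph{any} map into these sets, hence path-connectedness, is automatic. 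Your route buys more than the statement at hand: indiscreteness immediately gives connectedness as well, so Theorem \ref{Theorem:ConnectedBalls} would also follow with no further argument, and it pinpoints exactly how severely the Hausdorff property fails on each component; the paper's route is more hands-on and self-contained, never needing the classification of the subspace topology. The one step you should make explicit in a final write-up is the reduction you use implicitly: a map $\gamma:[0,1]\to A$ that is continuous for the subspace topology on $A$ is continuous as a map into $\Comp$, which is what the definition of a path in $\Comp$ requires; this is immediate from the definition of the subspace topology, but it is the hinge on which your argument turns.
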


\begin{proof}
We begin by showing $\CK$ is path-connected. Let $\x_1$ and $\x_2$ be in $\CK$ and consider the function $\gamma:[0,1]\to \Comp$ defined by 
\[\gamma(t)=\begin{cases}\x_1,& \text{if $0\leq t<1$,}\\ \x_2,&\text{if $t=1$.}\end{cases}\] Let $\mathcal{U}$ be open in $\Comp$. If $\x_1\in\mathcal{U}$, then $\x_2\in\CK\subseteq \mathcal{U}$ by Corollary \ref{Corollary:EssentialNeighborhoods}. Therefore, $\gamma^{-1}(\mathcal{U})=[0,1]$ which is open. If $\x_1\not\in\mathcal{U}$, then $\U\cap\CK = \varnothing$ also by Corollary \ref{Corollary:EssentialNeighborhoods}.  Thus $\x_2\not\in\mathcal{U}$. Therefore, $\gamma^{-1}(\U)=\varnothing$ which is open. So $\gamma$ is continuous, and thus a path for which $\mathrm{Im}(\gamma)\subseteq \CK$. Since $\x_1$ and $\x_2$ were chosen arbitrarily, the set $\CK$ is path-connected.  A similar argument shows $[\y]_{\TIIIa}$ is path-connected for any $\y$ in $\CK^c$.

Finally, we show $(\Comp,\metric_e)$ is locally path-connected. Let $\z$ be in $\Comp$. Appealing to Corollary \ref{Corollary:EssentialNeighborhoods}, if $\z$ is in $\CK$, then $\CK$ is a path-connected neighborhood of $\z$.  Likewise, if $\z$ is in $\CK^c$, then $[\z]_{\TIIIa}$ is a path-connected neighborhood of $\z$.  Thus every point has a path-connected neighborhood, and so $(\Comp,\metric_e)$ is locally path-connected.   
\end{proof}

In a locally path-connected space, the connected components coincide with the path components.  Thus, we obtain the following result which solve the problems proposed by Shapiro and Sundberg for the space $(\Comp,\metric_e)$.  Additionally, we see that $(\Comp, \metric_e)$ is an example of a space for which the Shapiro-Sundberg conjecture holds.

\begin{theorem}\label{Theorem:EssentialComponentResults}
Let $\varphi$ and $\psi$ be self-maps of $T$.  Then the following are equivalent:
\begin{alphaList}
\item $C_\varphi$ and $C_\psi$ are in the same essential component,
\item $C_\varphi$ and $C_\psi$ are in the same essential path component,
\item $C_\varphi-C_\psi$ is compact,
\item either $\varphi \TI \psi$ or $\varphi \TIIIa \psi$.
\end{alphaList}
\end{theorem}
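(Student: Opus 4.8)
The plan is to assemble the equivalence from three ingredients already in hand, using (d) as a common hub. The cheapest link is (c) $\Leftrightarrow$ (d): this is precisely Theorem \ref{Theorem:CompactDifference}, so no further argument is needed. The link (a) $\Leftrightarrow$ (b) follows from the fact, established immediately above, that $(\Comp,\metric_e)$ is locally path-connected; in any locally path-connected space the connected components and the path components coincide, so ``same essential component'' and ``same essential path component'' determine the very same partition of $\Comp$. It therefore remains to prove (a) $\Leftrightarrow$ (d).

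For (a) $\Leftrightarrow$ (d) I would invoke the explicit list of essential components from Theorem \ref{Theorem:ConnectedBalls}, namely $\{\CK\}\cup\{[\y]_{\TIIIa}:\y\in\CK^c\}$, and translate ``$C_\varphi$ and $C_\psi$ lie in a common block'' into a condition on the symbols. Two operators share an essential component exactly when either both belong to $\CK$ or both belong to a single class $[\y]_{\TIIIa}$. By Theorem \ref{Theorem:CphiResults}\ref{CphiCompact}, $C_\varphi\in\CK$ iff $\varphi\in\Sfin(T)$, so the first alternative says that $\varphi$ and $\psi$ both have finite range, which is the defining condition of $\varphi\TI\psi$. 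In the second alternative both operators are noncompact, hence both symbols have infinite range, and unwinding the definition $[C_\varphi]_{\TIIIa}=\{C_\psi\in\Comp:\varphi\TIIIa\psi\}$ gives exactly $\varphi\TIIIa\psi$ (the infinite-range hypothesis is already part of $\TIIIa$). Combining the two alternatives yields the conclusion that $C_\varphi$ and $C_\psi$ share an essential component iff $\varphi\TI\psi$ or $\varphi\TIIIa\psi$, which is (d).

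One preliminary point worth noting is that the assignment $\varphi\mapsto C_\varphi$ is injective---distinct self-maps give operators at distance $2$ by Theorem \ref{Theorem:CphiDifferenceNorm}---so the partition of $S(T)$ induced by $\TI$ on $\Sfin(T)$ and $\TIIIa$ on $\Sinf(T)$ transfers without collision to the operators appearing in the component list. I do not expect a genuine obstacle: the substantive work was completed in Section \ref{Section:CompositionDifferences} and in Theorem \ref{Theorem:ConnectedBalls}, and this theorem is essentially a consolidation. The only place demanding mild care is keeping the four relations straight and confirming that the excluded cases $\TII$ and $\TIIIb$---the ones failing to share a component---are exactly those with nonzero essential norm $1$ and $2$ recorded in Theorem \ref{Theorem:EssentialNormDifference}, which serves as a reassuring cross-check.
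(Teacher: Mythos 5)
Your proposal is correct and matches the paper's own route exactly: the paper obtains this theorem as an immediate consolidation of Theorem \ref{Theorem:CompactDifference} for (c) $\Leftrightarrow$ (d), Theorem \ref{Theorem:ConnectedBalls} for the component structure giving (a) $\Leftrightarrow$ (d), and the local path-connectedness result for (a) $\Leftrightarrow$ (b). Your additional remarks (injectivity of $\varphi \mapsto C_\varphi$ via Theorem \ref{Theorem:CphiDifferenceNorm}, and the cross-check against Theorem \ref{Theorem:EssentialNormDifference}) are sound but not needed.
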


\noindent We summarize the essential norm topological results of $\Comp$ below.

\begin{theorem}
The topological space $(\Comp,\metric_e)$:
\begin{alphaList}
\item is not Hausdorff.
\item is strongly locally compact (and thus locally compact), but not compact.
\item is locally path-connected (and thus locally connected), but not connected or path-connected.
\item contains no isolated points.
\end{alphaList}
\end{theorem}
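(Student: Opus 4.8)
The plan is to treat this as a summary theorem and assemble each clause from results already in hand, since the substantive work was completed when the open balls $B_e$ were computed in Theorem \ref{Theorem:EssentialBalls}. Concretely, part (a) is exactly the earlier corollary that $(\Comp,\metric_e)$ is not Hausdorff, and part (d) is exactly the corollary that there are no isolated points, which itself followed from the nontriviality of the classes $\CK$ and $[\y]_{\TIIIa}$ guaranteed by Lemma \ref{Lemma:ClassesNonTrivial}. I would dispatch these two first, each with a single citing sentence.

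For part (b), I would invoke Lemmas \ref{Lemma:EssentiallyClosed} and \ref{Lemma:EssentiallyCompact} together: they show every point lies in a closed, compact essential neighborhood ($\CK$ for points of $\CK$, and $[\y]_{\TIIIa}$ for $\y$ in $\CK^c$), which is precisely strong local compactness; local compactness is then automatic. The failure of compactness I would take from the earlier corollary noting that $\{\CK\}\cup\{[\y]_{\TIIIa}:\y\in\CK^c\}$ is an open cover admitting no finite subcover.

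For part (c), local path-connectedness is the theorem just proved, and local connectedness follows since every path-connected set is connected. Non-connectedness is the earlier corollary, witnessed by the clopen sets $\CK$ and $[\y]_{\TIIIa}$. The one clause not stated verbatim elsewhere is the failure of path-connectedness; I would derive it from the elementary fact that a path-connected space is connected, so that non-connectedness forces non-path-connectedness.

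There is no genuine obstacle here: every assertion is either a cited result or a one-line topological consequence of one. The only points warranting a word of care are the two \emph{and thus} parentheticals—local compactness from strong local compactness, and local connectedness from local path-connectedness—each a standard implication, the former valid even absent the Hausdorff property, as the text already observes.
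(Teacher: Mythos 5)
Your proposal is correct and matches the paper's treatment exactly: the theorem is stated there as a summary, with each clause already established in the preceding corollaries and theorems (not Hausdorff, strong local compactness via Lemmas \ref{Lemma:EssentiallyClosed} and \ref{Lemma:EssentiallyCompact}, non-compactness via the open cover $\{\CK\}\cup\{[\y]_{\TIIIa} : \y \in \CK^c\}$, local path-connectedness, non-connectedness, and absence of isolated points). Your only added step---deducing non-path-connectedness from non-connectedness since every path-connected space is connected---is the same one-line observation the paper leaves implicit, so there is nothing further to check.
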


\section{Comparison of $\Linf(T)$ and $H^\infty(\D)$ Through Composition Operators}\label{Section:Comparison}

In this section, we wish to compare the spaces $\Linf(T)$ and $H^\infty(\D)$.  Since the norms of both spaces are of the same form operator-theoretic results also follow the same form, especially those pertaining to the composition operator $C_\varphi$.

\begin{compbox}
For function $f:T \to \C$, the supremum-norm of $f$ is \[\|f\|_\infty = \sup_{v \in T} |f(v)|.\]
\tcblower
For $f:\D\to\C$ analytic, the supremum-norm of $f$ is \[\|f\|_\infty = \sup_{z \in \D} |f(z)|.\]
\end{compbox}

\noindent A straightforward calculation shows $\|C_\varphi:H^\infty\to H^\infty\| = 1$ for all analytic self-map $\varphi$ of $\D$, which is what $S(\D)$ refers to in the context of $H^\infty$.  Furthermore, $C_\varphi$ is compact on $H^\infty$ if and only if $\|\varphi\|_\infty < 1$ (see \cite[problem 3.2.2]{CowenMacCluer:1995}).  In \cite{Zheng:2002}, Zheng showed the essential norm of any composition operator on $H^\infty$ is either 0 or 1.  For $C_\varphi$ acting on $\Linf(T)$, compactness is characterized by the range of $\varphi$.  If we define the notation $|\cdot|_\infty: S(T) \to [0,\infty]$ by \[|\varphi|_\infty = \sup_{v \in T} |\varphi(v)|,\] then a self-map $\varphi$ of $T$ has finite range if and only if $|\varphi|_\infty < \infty.$   

\begin{compbox}
For $\varphi \in S(T)$, the composition operator $C_\varphi:\Linf\to\Linf$ is
\begin{alphaList}[leftmargin=2em]
\item bounded with $\|C_\varphi\| = 1$;
\item compact if and only if $|\varphi|_\infty < \infty$, and \[\|C_\varphi\|_e = \begin{cases}1 & \text{if $|\varphi|_\infty = \infty$,}\\0 & \text{if $|\varphi|_\infty < \infty$.}\end{cases}\] 
\end{alphaList}
\tcblower
For $\varphi \in S(\D)$, the composition operator $C_\varphi:H^\infty\to H^\infty$ is
\begin{alphaList}[leftmargin=2em]
\item bounded with $\|C_\varphi\| = 1$;
\item compact if and only if $\|\varphi\|_\infty < 1$, and \[\|C_\varphi\|_e = \begin{cases}1 & \text{if $\|\varphi\|_\infty = 1$,}\\0 & \text{if $\|\varphi\|_\infty < 1$.}\end{cases}\] 
\end{alphaList}
\end{compbox}

While the unit disk $\D$ is a bounded domain for functions in $H^\infty$, and $T$ is an unbounded domain, the conditions can be reconciled in the following manner.  If one considers $\D$ with the hyperbolic metric rather than the Euclidean metric, $T$ can be embedded into $\D$ where $o$ maps to the origin. Such an embedding was studied for $T$ a homogeneous rooted infinite tree by Cohen and Colonna in \cite{CohenColonna:1994}. In this way, the compactness of $C_\varphi$ on $\Linf$ matches those for $C_\varphi$ acting on $H^\infty$.

Composition differences $C_\varphi-C_\psi$ are central in the study of component structures and isolated points.  MacCluer, Ohno, and Zhao characterized the compact difference $C_\varphi-C_\psi$ on $H^\infty$ in \cite[Theorem 3]{MacCluerOhnoZhao:2001} in terms of the pseudo-hyperbolic distance between $z$ and $w$ in $\D$ defined by \[\beta(z,w) = \left|\frac{z-w}{1-\overline{z}w}\right|.\]  

\begin{compbox}
Let $\varphi, \psi \in S(T)$ with $\varphi\neq\psi$. Then the following are equivalent:
\begin{alphaList}[leftmargin=2em]
\item $C_\varphi-C_\psi:\Linf\to\Linf$ is\\ compact;
\item one of \normalfont{(i)} or \normalfont{(ii)} holds:
\begin{enumerate}[leftmargin=1.5em,label={\normalfont{(\roman*)}}]
\item $|\varphi|_\infty,|\psi|_\infty < \infty$,
\item $|\varphi|_\infty = |\psi|_\infty = \infty$ and \[N_{\varphi,\psi} \text{ is finite}.\]
\end{enumerate}
\end{alphaList}
\tcblower
Let $\varphi, \psi \in S(\D)$ with $\varphi\neq\psi$. Then the following are equivalent:
\begin{alphaList}[leftmargin=2em]
\item $C_\varphi-C_\psi:H^\infty\to H^\infty$ is\\ compact;
\item one of \normalfont{(i)} or \normalfont{(ii)} holds:
\begin{enumerate}[leftmargin=1.5em,label={\normalfont{(\roman*)}}]
\item $\|\varphi\|_\infty,\|\psi\|_\infty < 1$,
\item $\|\varphi\|_\infty = \|\psi\|_\infty = 1$ and 
\[\begin{aligned}
0 &= \lim_{|\varphi(z)|\to 1} \beta(\varphi(z),\psi(z))\\
&= \lim_{|\psi(z)|\to 1} \beta(\varphi(z),\psi(z)).
\end{aligned}\]
\end{enumerate}
\end{alphaList}
\end{compbox}

These two results are quite different in nature upon close inspection in the case when neither $C_\varphi$ or $C_\psi$ are compact.  For $C_\varphi-C_\psi$ acting on $\Linf(T)$, the symbols $\varphi$ and $\psi$ must be equal off a finite set.  If one defines the boundary of $T$ in some topological sense, it will need to be the case that $\varphi$ and $\psi$ have the same boundary behavior.  While this is essentially the analogous condition for $C_\varphi-C_\psi$ compact on $H^\infty$, the symbols are not so restricted inside $\D$.  In fact, \cite[Example 1]{MacCluerOhnoZhao:2001} shows the following self-maps of $\D$ \[\begin{aligned}\varphi(z) &= sz+1-s, \quad 0 < s < 1;\\
\psi(z) &= \varphi(z) + t(z-1)^b,\end{aligned}\]
where $b > 2$, $t \in \R$, and $|t|$ small enough so that $\psi$ maps $\D$ into $\D$, induce a compact $C_\varphi-C_\psi$.  These maps are such that $\|\varphi\|_\infty = \|\psi\|_\infty = 1$, but $\varphi(z) \neq \psi(z)$ for all $z \in \D$.

As the compactness characterizations of $C_\varphi-C_\psi$ show fundamental differences between $\Linf(T)$ and $H^\infty(\D)$, so do their norms.  This difference provides the reason why the topological structure of $\Comp$ and $\CC(H^\infty)$ are fundamentally different in the operator norm topology.  MacCluer, Ohno, and Zhao showed that the topological space $\CC(H^\infty)$ with the operator norm topology is homeomorphic to the metric space of analytic self-maps of $\D$ induced by the metric \[d_\beta(\varphi,\psi) = \sup_{z \in \D} \beta(\varphi(z),\psi(z)).\]  

\begin{compbox}
Let $\varphi, \psi \in S(T)$ with $\varphi\neq\psi$. Then for $C_\varphi-C_\psi:\Linf \to \Linf$,\\[10pt] \centerline{$\|C_\varphi-C_\psi\| = 2.$}
\tcblower
Let $\varphi,\psi \in S(\D)$ with $\varphi\neq\psi$. Then for $C_\varphi-C_\psi:H^\infty \to H^\infty$, \[\|C_\varphi-C_\psi\| = \frac{2-2\sqrt{1-d_\beta(\varphi,\psi)^2}}{d_\beta(\varphi,\psi)}.\] 
\end{compbox}

For the case of $\Linf(T)$, the operator norm of $C_\varphi-C_\psi$ makes $\Comp$ into a discrete metric space (under the operator norm topology).  Thus every composition operator is isolated in $\Comp$.  This is very extreme, and very different from the case of $\CC(H^\infty)$.  As noted in \cite{MacCluerOhnoZhao:2001}, the norm of $C_\varphi-C_\psi$ is equal to 2 exactly when $d_\beta(\varphi,\psi) = 1$, and less than 2 otherwise. In fact, \cite[Theorem 2]{MacCluerOhnoZhao:2001} proves this is precisely the characterization of the isolated $C_\varphi:H^\infty \to H^\infty$.  One can see the results on $\Linf$ and $H^\infty$ match when written as follows.

\begin{compbox}
Let $\varphi \in S(T)$. Then the following are equivalent:
\begin{alphaList}[leftmargin=2em]
\item $C_\varphi$ is isolated in $\Comp$.
\item for any $\psi \in S(T)$, $\psi\neq\varphi$, \[\|C_\varphi-C_\psi\|=2.\]
\end{alphaList}
\tcblower
Let $\varphi \in S(\D)$. Then the following are equivalent:
\begin{alphaList}[leftmargin=2em]
\item $C_\varphi$ is isolated in $\CC(H^\infty)$.
\item for any $\psi \in S(\D)$, $\psi\neq\varphi$, \[\|C_\varphi-C_\psi\|=2.\]
\item for any $\psi \in S(\D)$, $\psi \neq \varphi$, \[\sup_{z \in \D}\beta(\varphi(z),\psi(z)) = 1.\]
\end{alphaList}
\end{compbox}
\noindent As $T$ has very little structure, it is an open question whether other discrete objects, with more structure, would yield a topology on $\Comp$ that more closely resembles that of $\CC(H^\infty)$.

We end this section with open questions that would continue this line of research:
\begin{numList}
\item Is $(\Comp,\metric_e)$ $\sigma$-compact, that is can $(\Comp,\metric_e)$ be written as a countable union of compact sets?
\item What is the topological structure of $\mathscr{W}(\Linf)$, the space of weighted composition operators $W_{\psi,\varphi}$?
\item Are there discrete function spaces for which $(\CC,\metric_u)$ is not totally disconnected?
\item Are there discrete function spaces for which $\|C_\varphi-C_\psi\|$ or $\|C_\varphi-C_\psi\|_e$ is not a simple function?  
\end{numList}

\bibliographystyle{amsplain}
\bibliography{references.bib}
\end{document}